\date{}
\newcommand{\n}{\mathbb{N}}
\newcommand{\z}{\mathbb{Z}}
\newcommand{\re}{\mathbb{R}}
\newcommand{\ep}{\varepsilon}
\renewcommand{\qed}{{\penalty 10000\mbox{$\quad\Box$}}}
\newcommand{\meas}{\operatorname{meas}}
\newcommand{\dint}{\int\!\!\!\!\int}
\newcommand{\ud}{u_{\delta}}
\newcommand{\sn}{\mathbb{S}^{d-1}}
\newcommand{\dzp}{\delta\to 0^{+}}
\newcommand{\lz}{\Lambda_{0}}
\newcommand{\glim}{\Gamma\mbox{--}\lim}
\newcommand{\gliminf}{\Gamma\mbox{--}\liminf}
\newcommand{\pca}{\mathcal{PC\kern-1pt A}}
\newcommand{\ld}{\Lambda_{\delta}}
\newcommand{\ldh}{\widehat{\Lambda}_{\delta}}
\newcommand{\fdup}{f_{\delta}}
\newcommand{\lk}{\lambda_{k}}
\newcommand{\udh}{\widehat{u}_{\delta}}
\newcommand{\pt}{\theta}
\newcommand{\pth}{\widehat{\theta}}
\newtheorem{thm}{Theorem}[section]
\newtheorem{thmbibl}{Theorem}
\newtheorem{prop}[thm]{Proposition}
\newtheorem{defn}[thm]{Definition}
\newtheorem{cor}[thm]{Corollary}
\newtheorem{lemma}[thm]{Lemma}
\newtheorem{open}{Question}
\title{On the gap between Gamma-limit and pointwise limit for a non-local approximation of the total variation
} 
\author{Clara Antonucci\vspace{1ex}\\ 
{\normalsize Scuola Normale Superiore} \\
{\normalsize PISA (Italy)}\\
{\normalsize e-mail: \texttt{clara.antonucci@sns.it}}
\and
Massimo Gobbino\vspace{1ex}\\ 
{\normalsize Universit\`a degli Studi di Pisa} \\
{\normalsize PISA (Italy)}\\  
{\normalsize e-mail: \texttt{massimo.gobbino@unipi.it}}
\and
Nicola Picenni\vspace{1ex}\\ 
{\normalsize Scuola Normale Superiore} \\
{\normalsize PISA (Italy)}\\
{\normalsize e-mail: \texttt{nicola.picenni@sns.it}}
}
\begin{document}
\maketitle
\begin{abstract}

We consider the approximation of the total variation of a function by the family of non-local and non-convex functionals introduced by H.~Brezis and H.-M.~Nguyen in a recent paper. The approximating functionals are defined through double integrals in which every pair of points contributes according to some interaction law.

In this paper we answer two open questions concerning the dependence of the Gamma-limit on the interaction law. In the first result, we show that the Gamma-limit depends on the full shape of the interaction law, and not only on the values in a neighborhood of the origin. In the second result, we show that there do exist interaction laws for which the Gamma-limit coincides with the pointwise limit on smooth functions.

The key argument is that for some special classes of interaction laws the computation of the Gamma-limit can be reduced to studying the asymptotic behavior of suitable multi-variable minimum problems.

\vspace{6ex}

\noindent{\bf Mathematics Subject Classification 2010 (MSC2010):}
26B30, 46E35.


\vspace{6ex}

\noindent{\bf Key words:} Gamma-convergence, total variation, bounded variation functions, non-local functional, non-convex functional. 

\vspace{6ex}

\end{abstract}

 
\section{Introduction}

In the recent paper~\cite{2018-AnPDE-BreNgu} (see also the note~\cite{2017-CRAS-BreNgu}, or the conference~\cite{Brezis:youtube} for a nice presentation of the topic), H.~Brezis and H.-M.~Nguyen introduced the family of non-local functionals
\begin{equation}
\ld(\varphi,u,\Omega):=\dint_{\Omega^{2}}\varphi\left(\frac{|u(y)-u(x)|}{\delta}\right)\frac{\delta}{|y-x|^{d+1}}\,dx\,dy,
\label{defn:idphi}
\end{equation}
where $d$ is a positive integer, $\Omega\subseteq\re^{d}$ is an open set, $\delta>0$ is a real parameter, $u:\Omega\to\re$ is a measurable function, and $\varphi:[0,+\infty)\to[0,+\infty)$ is a measurable function satisfying suitable properties. The function $\varphi$, whose presence is motivated by problems in image processing (see~\cite{2015-Lincei-Brezis,2018-AnPDE-BreNgu}), describes the extent to which a pair $(x,y)\in\Omega^{2}$ contributes to the double integral (\ref{defn:idphi}). For this reason, in the sequel we call $\varphi$ the ``interaction law''.

Following~\cite{2018-AnPDE-BreNgu}, we restrict ourselves to a special set of functions.

\begin{defn}[Admissible interaction laws]\label{defn:A}
\begin{em}

Let $\mathcal{A}$ denote the set of all functions $\varphi:[0,+\infty)\to[0,+\infty)$ not identically equal to zero such that
\begin{enumerate}
\renewcommand{\labelenumi}{(\roman{enumi})}

\item  $\varphi$ is nondecreasing and lower semicontinuous on $[0,+\infty)$, and actually continuous except at a finite number of points in $(0,+\infty)$,

\item  there exists a nonnegative real number $a$ such that $\varphi(t)\leq at^{2}$ for every $t\in[0,1]$,

\item  there exists a positive real number $b$ such that $\varphi(t)\leq b$ for every $t\geq 0$.

\end{enumerate}

\end{em}
\end{defn}

These conditions guarantee that when $\varphi\in\mathcal{A}$ the right-hand side of (\ref{defn:idphi}) is finite at least for every $u$ of class $C^{1}$ with compact support, and the resulting functional $\ld$ is lower semicontinuous with respect to the convergence in $L^{1}(\Omega)$.

The basic example is the case where $\varphi(x)$ coincides with
\begin{equation}
\varphi_{1}(x):=\left\{
\begin{array}{l@{\qquad}l}
 0 & \mbox{if } x\in[0,1],  \\[0.5ex]
 1 & \mbox{if } x>1.   
\end{array}
\right.
\label{defn:phi-1}
\end{equation}

In~\cite{2018-AnPDE-BreNgu} also a normalization condition is included in the definition of $\mathcal{A}$. In this paper we do not impose any normalization condition, but instead we set
\begin{equation}
N(\varphi):=\int_{0}^{+\infty}\frac{\varphi(t)}{t^{2}}\,dt
\qquad
\forall\varphi\in\mathcal{A},
\label{defn:size}
\end{equation}
and we exploit this constant as a scale factor when computing limits and Gamma-limits.
	
\paragraph{\textmd{\textit{Previous literature}}}

The asymptotic behavior of the family $\ld$ was investigated in a series of papers, starting with the model case $\varphi=\varphi_{1}$ (see~\cite{2006-CRAS-BouNgu,2006-JFA-Nguyen,2007-CRAS-Nguyen,2008-JEMS-Nguyen,2011-Duke-Nguyen}). The idea is that $\ld(\varphi,u,\Omega)$ is asymptotically proportional to the functional
\begin{equation}
\lz(u,\Omega):=\left\{
\begin{array}{l@{\qquad}l}
    \mbox{total variation of $u$ in $\Omega$}  & \mbox{if $u\in BV(\Omega)$,}  \\[1ex]
    +\infty & \mbox{if $u\in L^{1}(\Omega)\setminus BV(\Omega)$.}
\end{array}\right.
\nonumber
\end{equation}

In order to state the precise results, let $\sn:=\{\sigma\in\re^{d}:|\sigma|=1\}$ denote the unit sphere in $\re^{d}$, and let us consider the geometric constant
\begin{equation}
G_{d}:=\int_{\sn}|\langle v,\sigma\rangle|\,d\sigma,
\label{defn:gd}
\end{equation}
where $v$ is any element of $\sn$ (of course the value of $G_{d}$ does not depend on the choice of $v$), and the integration is intended with respect to the $(d-1)$-dimensional Hausdorff measure. 

The main convergence results obtained in~\cite{2018-AnPDE-BreNgu} can be summed up as follows.\begin{itemize} 

\item \emph{Pointwise convergence.} For every $\varphi\in\mathcal{A}$ it turns out that
\begin{equation}
\lim_{\dzp}\ld\left(\varphi,u,\re^d\right)=G_{d}\cdot N(\varphi)\cdot\lz\left(u,\re^{d}\right)
\qquad
\forall u\in C^{1}_{c}(\re^{d}),
\nonumber
\end{equation}
where $G_{d}$ is the geometric constant defined in (\ref{defn:gd}), and $N(\varphi)$ is the scale factor defined in (\ref{defn:size}).

On the other hand, there do exist functions $u\in W^{1,1}(\re^{d})$ for which the left-hand side is infinite (while of course the right-hand side is finite). A precise characterization of equality cases is still unknown.

\item \emph{Gamma-convergence.} For every $\varphi\in\mathcal{A}$, there exists a constant $K_{d}(\varphi)$, depending a~priori also on the space dimension, such that
\begin{equation}
\glim_{\dzp}\ld\left(\varphi,u,\re^d\right)=G_{d}\cdot N(\varphi)\cdot K_{d}(\varphi)\cdot\lz\left(u,\re^{d}\right)
\qquad
\forall u\in L^{1}(\re^{d}),
\label{thmbibl:g-conv}
\end{equation}
where the Gamma-limit is intended with respect to the usual metric of $L^{1}(\re^{d})$ (but the result would be the same with respect to the convergence in measure). 

\end{itemize}

Assuming that $K_{d}(\varphi)$ does not depend on $d$, as one reasonably expects, we could interpret (\ref{thmbibl:g-conv}) by saying that the Gamma-limit depends on the space dimension through the geometric constant $G_{d}$, on the size of $\varphi$ through the scale factor $N(\varphi)$, and on the shape of $\varphi$ through the ``shape factor'' $K_{d}(\varphi)$. 

The behavior under rescaling clarifies the different nature of the scale and the shape factors. If we replace $\varphi(t)$ by $\widehat{\varphi}(t):=\alpha\varphi(\beta t)$ for some positive constants $\alpha$ and $\beta$, a change of variables shows that $N(\widehat{\varphi})=\alpha\beta N(\varphi)$, and the same scaling affects the left-hand side of (\ref{thmbibl:g-conv}). It follows that $K_{d}(\varphi)=K_{d}(\widehat{\varphi})$, namely the shape factor is invariant by both horizontal and vertical rescaling.

Very little was known about $K_{d}(\varphi)$. In~\cite{2007-CRAS-Nguyen,2011-Duke-Nguyen} it was proved that $K_{d}(\varphi_{1})\leq\log 2$, where $\varphi_{1}$ is the model interaction law defined in (\ref{defn:phi-1}). In~\cite{2018-AnPDE-BreNgu} it was proved that
\begin{equation}
K_{d}(\varphi_{1})\leq K_{d}(\varphi)\leq 1
\qquad
\forall\varphi\in\mathcal{A}.
\nonumber
\end{equation}

In order to shed some light on $K_{d}(\varphi)$, whose appearance in the Gamma-limit was defined in~\cite{2018-AnPDE-BreNgu} ``mysterious and somewhat counterintuitive'', some open questions were explicitly stated. The first one addresses the dependence of $K_{d}(\varphi)$ on the full shape of~$\varphi$.

\begin{open}[{see~\cite[Open Problem~4]{2018-AnPDE-BreNgu}}]\label{Q4}

Assume that two functions $\varphi$ and $\psi$ in $\mathcal{A}$ satisfy $N(\varphi)=N(\psi)$, and
$$\varphi\geq\psi \mbox{ near 0}\quad (\mbox{resp. }\varphi=\psi \mbox{ near 0}\,).$$ 

Is it true that
$$K_{d}(\varphi)\geq K_{d}(\psi)\quad \left(\mbox{resp. }K_{d}(\varphi)= K_{d}(\psi)\strut\right)?$$ 

\end{open}

A positive answer to this question would imply that, once that the scale factor has been fixed, only the behavior of the interaction law in a neighborhood of the origin is relevant to the Gamma-limit. This intuition is supported by the observation that the kernel in (\ref{defn:idphi}) is divergent on the diagonal $y=x$, and therefore short-range interactions should be more relevant in the computation of $K_{d}(\varphi)$.

The second question addresses the necessity and the width of the gap between the pointwise limit and the Gamma-limit.

\begin{open}[{see~\cite[Open Problem~2]{2018-AnPDE-BreNgu}}]\label{Q2}

Is it true that $K_{d}(\varphi)<1$ for every $\varphi\in\mathcal{A}$ (and every space dimension $d$)? Or even better: Is it true that 
\begin{equation}
\sup\{K_{d}(\varphi):\varphi\in\mathcal{A}\}<1\mbox{\,?}
\label{th:Q2-strong}
\end{equation}

\end{open}

A positive answer to this question, especially in the stronger form (\ref{th:Q2-strong}), would imply that the counterintuitive gap is structural.

\paragraph{\textmd{\textit{Our results}}}

In this paper we show that the answer to both questions is negative. To this end, we introduce the following special classes of interaction laws.

\begin{defn}[Special interaction laws]
\begin{em}

For every positive integer $k$, we consider the interaction law $\varphi_{k}:[0,+\infty)\to[0,+\infty)$ defined as
\begin{equation}
\varphi_{k}(t):=\varphi_{1}\left(\frac{t}{k}\right)=\left\{
\begin{array}{l@{\qquad}l}
 0 & \mbox{if } t\in[0,k],  \\[0.5ex]
 1 & \mbox{if } t>k.
\end{array}
\right.
\label{defn:phi-k}
\end{equation}

\begin{itemize}

\item  Let $\mathcal{A}_{0}$ denote the set of interaction laws that vanish in $[0,1]$, namely
\begin{equation}
\mathcal{A}_{0}:=\{\varphi\in\mathcal{A}:\varphi(t)=0\quad \forall t\in[0,1]\}.
\end{equation}

\item  Let $\pca$ denote the set of interaction laws that can be written in the form
\begin{equation}
\varphi(t)=\sum_{k=1}^{m}\lk\varphi_{k}(t)
\qquad
\forall t\geq 0
\label{defn:phi-pca}
\end{equation}
for some positive integer $m$, and some nonnegative real numbers $\lambda_{1}$, \ldots, $\lambda_{m}$ (not all equal to zero).

\item  Let $\pca_{2}$ denote the set of interaction laws of the form (\ref{defn:phi-pca}) whose coefficients  are equal in packages of powers of two, namely 
\begin{equation}
\lambda_{2}=\lambda_{3},
\qquad\quad
\lambda_{4}=\ldots=\lambda_{7},
\qquad\quad
\lambda_{8}=\ldots=\lambda_{15},
\nonumber
\end{equation}
and so on. More precisely, every $\varphi\in\pca_{2}$ can be written in the form
\begin{equation}
\varphi(t):=\sum_{j=1}^{m}\left(a_{j}\sum_{k=2^{j-1}}^{2^{j}-1}\varphi_{k}(t)\right)
\qquad
\forall t\geq 0
\label{defn:pca2}
\end{equation}
for some positive integer $m$, and some nonnegative real numbers $a_{1}$, \ldots, $a_{m}$ (not all equal to zero).

\end{itemize}

\end{em}
\end{defn}

We observe that
$\mathcal{A}\supseteq\mathcal{A}_{0}\supseteq\pca\supseteq\pca_{2}$,
and all inclusions are strict.

Our first result is the following.

\begin{thm}[Piecewise constant interaction laws]\label{thm:Q4}

Let $K_{d}(\varphi)$ be the shape factor of an interaction law as defined by (\ref{thmbibl:g-conv}). 

Then in every space dimension $d$ it turns out that
\begin{equation}
\sup\{K_{d}(\varphi):\varphi\in\pca_{2}\}=1.
\nonumber
\end{equation}

\end{thm}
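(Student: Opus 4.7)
The plan is to construct an explicit family $\varphi^{(m)}\in\pca_{2}$ with $K_{d}(\varphi^{(m)})\to 1$ as $m\to\infty$. Since the upper bound $K_{d}(\varphi)\leq 1$ for every $\varphi\in\mathcal{A}$ is already in~\cite{2018-AnPDE-BreNgu}, the real content is proving a matching lower bound. The strategy is to reduce the computation of $K_{d}(\varphi)$, for $\varphi\in\pca_{2}$, to a finite-dimensional minimum problem in which the dyadic structure of $\pca_{2}$ can be fully exploited.

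\textbf{Step 1: reduction to a one-dimensional problem.} By the slicing properties of $\ld(\varphi,\cdot,\cdot)$ combined with the Gamma-convergence statement~(\ref{thmbibl:g-conv}) applied to a characteristic function of a half-space, $K_{d}(\varphi)$ equals (up to a constant depending only on $d$) the infimum over monotone piecewise constant approximations $u_{\delta}$ of the Heaviside step on $\re$ of
\begin{equation}
\liminf_{\dzp}\ld(\varphi,u_{\delta},\re).
\nonumber
\end{equation}
Since $\varphi=\sum_{k}\lambda_{k}\varphi_{k}$ for $\varphi\in\pca$, each term $\ld(\varphi_{k},u_{\delta},\re)$ measures, with weight $\delta/|y-x|^{2}$, the pairs of points between which the cumulative jump of $u_{\delta}$ exceeds $k\delta$, and the whole integral becomes an explicit combinatorial functional of the jump positions and heights of~$u_{\delta}$.

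\textbf{Step 2: from the infimum to a finite-dimensional minimum via dyadic self-similarity.} For $\varphi\in\pca_{2}$ written in the form~(\ref{defn:pca2}), the packaging of the coefficients matches exactly the scaling of the kernel: the horizontal rescaling $t\mapsto 2t$ sends $\varphi_{k}$ into $\varphi_{k/2}$ and maps one package into the previous one, while a combination of horizontal and vertical rescalings leaves $K_{d}$ invariant. Exploiting this invariance, I expect to restrict the infimum of Step~1 to profiles $u_{\delta}$ that are self-similarly organised at the dyadic scales $\delta,2\delta,4\delta,\ldots,2^{m}\delta$, and thereby to reduce the problem to minimising a functional $F_{m}(a_{1},\ldots,a_{m};\,\xi_{1},\ldots,\xi_{N})$ of finitely many geometric parameters describing the relative heights of jumps within a single dyadic window. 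The minimum value will be of the form $N(\varphi)-E_{m}$ with $E_{m}$ explicit and bounded in $m$ by a sum of savings, one per dyadic package.

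\textbf{Step 3: choice of the family and conclusion.} The candidate is $\varphi^{(m)}\in\pca_{2}$ with equal coefficients $a_{1}=\cdots=a_{m}=1$ (or any choice that spreads $N(\varphi^{(m)})$ over $m$ packages), so that $N(\varphi^{(m)})$ grows linearly in $m$. The point is that any rearrangement of the step profile $u_{\delta}$ can save only an $O(1)$ amount in each dyadic package: a jump configuration tuned to be efficient at scale $2^{j}\delta$ produces essentially no savings at scales $2^{j+1}\delta$ or larger, because at the coarser scale the cumulative jump already exceeds the threshold. This yields $E_{m}/N(\varphi^{(m)})\to 0$ and hence $K_{d}(\varphi^{(m)})\geq 1-c/m$.

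\textbf{Main obstacle.} The crux of the proof is Step~2: rigorously showing that near-optimal competitors $u_{\delta}$ can be assumed to inherit the dyadic self-similarity of $\varphi$, and matching a sharp combinatorial analysis of the pairwise interactions at each dyadic scale with the actual double integral in~(\ref{defn:idphi}). This requires a careful decomposition of $\ld(\varphi,u_{\delta},\re)$ into contributions at each scale and an estimate showing that the cross-interactions between scales cannot be engineered to provide more than an $O(1)$ global saving, regardless of the fine structure of~$u_{\delta}$.
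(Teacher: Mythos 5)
Your choice of family is the right one (the paper's proof uses exactly $\psi_{m}=\sum_{k=1}^{2^{m}-1}\varphi_{k}$, compares the scale factor $N(\psi_{m})=\sum_{k=1}^{2^{m}-1}1/k\sim m\log 2$ with a lower bound $G_{d}\cdot m\log 2$ for the Gamma-liminf, and lets $m\to+\infty$), but the argument you offer for that lower bound has a genuine gap at what you yourself call the main obstacle, Step~2. Since you need a lower bound on the Gamma-liminf, the estimate must hold for \emph{all} competitors $u_{\delta}$; you cannot restrict to profiles that ``inherit the dyadic self-similarity of $\varphi$'', and you give no mechanism forcing near-optimal competitors into such a class. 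The paper never needs such a reduction: after reducing to nondecreasing step functions with values in $\delta\z$ --- a reduction that itself requires the rearrangement inequality of Theorem~\ref{thm:dino-sd} via Lemma~\ref{lemma:MST}, plus the localization Theorem~\ref{prop:loc2glob}, both of which your Step~1 silently assumes --- the energy of an \emph{arbitrary} competitor is bounded below by $\delta P_{n,\varphi}(\ell_{1},\ldots,\ell_{n})$ with $n\sim\delta^{-1}$ free step lengths (Proposition~\ref{prop:main}), and the decisive point is an inequality valid for every configuration: the telescopic cancellation of Lemma~\ref{lemma:telescopic}, giving $\sum_{k=2^{j-1}}^{2^{j}-1}L_{k}\geq(n-2^{j}+1)\cdot 2\log 2$ (Corollary~\ref{cor:2}) and hence Proposition~\ref{prop:package}. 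This uniform-in-configuration estimate is precisely what your plan does not supply.

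Moreover, the per-scale heuristic you invoke (a configuration efficient at scale $2^{j}\delta$ yields essentially no saving at coarser scales) is exactly where a scale-by-scale analysis breaks down: for a single threshold $\varphi_{k}$ with $k\geq 2$, equal jumps are \emph{not} optimal (for $\varphi_{3}$ the periodic pattern $1,0,0,1,0,0,\ldots$ beats equal steps, as the paper points out), so the scales cannot be handled separately; only the sum over a full dyadic package $k\in\{2^{j-1},\ldots,2^{j}-1\}$ telescopes, and that algebraic identity is the missing idea. There is also an internal accounting slip: a saving of order one \emph{per package} would give $E_{m}=O(m)$, comparable to $N(\varphi^{(m)})\sim m\log 2$, which would not yield $K_{d}\to 1$; what the correct bound delivers is a \emph{global} saving $N(\psi_{m})-m\log 2=O(1)$, i.e.\ $K_{d}(\psi_{m})\geq m\log 2/N(\psi_{m})\to 1$, exactly as in (\ref{est:K-Q4}) and (\ref{est:N-Q4}).
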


A closer look at the proof reveals that the supremum is realized for example by the interaction laws of the form (\ref{defn:pca2}) with $a_{1}=\ldots=a_{m}=1$, in the limit as $m\to +\infty$. 

Theorem~\ref{thm:Q4} above provides a negative answer to Question~\ref{Q4}, as well as a negative answer to Question~\ref{Q2} in the stronger form (\ref{th:Q2-strong}). In particular, this means that the full shape of the interaction law comes into play in the computation of the Gamma-limit, which therefore keeps into account both short-range and long-range interactions. At the beginning of Section~\ref{sec:speculation} we present also an example with strict inequalities, namely with $\varphi>\psi$ near the origin, but $K_{d}(\varphi)<K_{d}(\psi)$.

Our second result gives a stronger negative answer to Question~\ref{Q2}, even when restricted to the smaller class $\mathcal{A}_{0}$.

\begin{thm}[Piecewise affine dyadic interaction laws]\label{thm:Q2}

Let $K_{d}(\varphi)$ be the shape factor of an interaction law as defined by (\ref{thmbibl:g-conv}). 

Then the following statements hold true.

\begin{enumerate}
\renewcommand{\labelenumi}{(\arabic{enumi})}

\item In every space dimension $d$ it turns out that
\begin{equation}
\max\{K_{d}(\varphi):\varphi\in\mathcal{A}_{0}\}=1.
\nonumber
\end{equation}

\item More precisely, let $f:\z\to[0,+\infty)$ be any nondecreasing and bounded function (not identically equal to zero) such that
\begin{equation}
\limsup_{n\to+\infty}f(-n)\cdot 4^{n}<+\infty.
\label{hp:f-quadratic}
\end{equation}

Let us consider the function $\zeta:[0,+\infty)\to[0,+\infty)$ such that
\begin{itemize}
\item $\zeta(0)=0$,
\item $\zeta(2^{z})=f(z)$ for every $z\in\z$,
\item $\zeta$ is affine in the interval $[2^{z},2^{z+1}]$ for every $z\in\z$.
\end{itemize}

Then it turns out that $\zeta\in\mathcal{A}$, and $K_{d}(\zeta)=1$ in every space dimension $d$.

\end{enumerate}

\end{thm}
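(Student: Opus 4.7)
The plan is to reduce statement~(2) to the case of piecewise constant dyadic approximations, and to leverage the analysis behind Theorem~\ref{thm:Q4}. Since the bound $K_{d}(\varphi)\leq 1$ holds for every $\varphi\in\mathcal{A}$, it suffices to prove the matching lower bound $K_{d}(\zeta)\geq 1$; statement~(1) then follows, as $\zeta$ lies in $\mathcal{A}_{0}$ when $f(0)=0$, so that $\zeta$ vanishes on $[0,1]$.

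First I would verify $\zeta\in\mathcal{A}$. Monotonicity, continuity, and boundedness are immediate from the construction and the hypotheses on $f$. The required quadratic control $\zeta(t)\leq at^{2}$ on $[0,1]$ is exactly what hypothesis~(\ref{hp:f-quadratic}) secures: if $f(-n)\leq C\cdot 4^{-n}$ for all large $n$, then on each dyadic interval $[2^{-n-1},2^{-n}]\subseteq[0,1]$ one has $\zeta(t)\leq f(-n)\leq C\cdot 4^{-n}\leq 4Ct^{2}$.

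To prove $K_{d}(\zeta)\geq 1$, I would approximate $\zeta$ from below by piecewise constant dyadic interaction laws $\zeta_{N}$, obtained by truncating the dyadic structure of $\zeta$ to the scales $2^{z}$ with $|z|\leq N$ and replacing the affine piece on each interval $[2^{z},2^{z+1}]$ by the lower constant $f(z)$. These laws satisfy $\zeta_{N}\leq\zeta$ pointwise, and a direct computation using $N(\varphi_{k})=1/k$ shows that $N(\zeta_{N})/N(\zeta)\to 1$ as $N\to\infty$. Monotonicity of the Gamma-limit in the interaction law then yields $N(\zeta_{N})\,K_{d}(\zeta_{N})\leq N(\zeta)\,K_{d}(\zeta)$, so it is enough to show $K_{d}(\zeta_{N})\to 1$. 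Up to a horizontal rescaling (to which $K_{d}$ is invariant, as recalled in the introduction), the law $\zeta_{N}$ belongs to $\pca$ and exhibits a number of dyadic blocks that grows with $N$. The asymptotic analysis performed for Theorem~\ref{thm:Q4}, which relies essentially on the proliferation of dyadic blocks rather than on the uniform choice of coefficient inside each block, should then deliver $K_{d}(\zeta_{N})\to 1$.

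The main obstacle I anticipate is precisely this last step: extending the multi-variable minimum problem estimate underlying Theorem~\ref{thm:Q4} from the packaged class $\pca_{2}$ (where $a_{j}$ is constant inside each dyadic block) to general piecewise constant dyadic laws with potentially very uneven block coefficients, while keeping the lower bound on $K_{d}(\zeta_{N})$ robust enough in $N$ to close the sandwich. A careful bookkeeping of how the coefficients $f(z+1)-f(z)$ enter the minimum problem, together with a verification that the rate $1-K_{d}(\zeta_{N})\to 0$ does not deteriorate when these coefficients are allowed to vary arbitrarily, is the technical heart of the argument.
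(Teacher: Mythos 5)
Your plan breaks down quantitatively at the step ``$N(\zeta_N)/N(\zeta)\to 1$''. Replacing the affine piece of $\zeta$ on $[2^{z},2^{z+1}]$ by the constant $f(z)$ removes from the scale factor the fixed amount $\int_{2^{z}}^{2^{z+1}}(\zeta(t)-f(z))\,t^{-2}\,dt=(f(z+1)-f(z))\,2^{-z}\left(\log 2-\tfrac12\right)$, which does not depend on $N$; summing over $z$ and using $\sum_{z}(f(z+1)-f(z))2^{-z}=\sum_{z}f(z)2^{-z}$, one finds $N(\zeta_{N})\to\tfrac12\sum_{z}f(z)2^{-z}$ while $N(\zeta)=\log 2\cdot\sum_{z}f(z)2^{-z}$, so $N(\zeta_{N})/N(\zeta)\to 1/(2\log 2)\approx 0.72$. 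Hence, even granting $K_{d}(\zeta_{N})\to 1$, your sandwich $N(\zeta_{N})K_{d}(\zeta_{N})\leq N(\zeta)K_{d}(\zeta)$ only yields $K_{d}(\zeta)\geq 1/(2\log 2)$, not $K_{d}(\zeta)=1$. To make the loss vanish you must minorize each affine piece by piecewise constant laws with many small steps, and then the second difficulty, which you yourself flag as open, becomes unavoidable: after rescaling, such minorants lie in $\pca$ but in general not in $\pca_{2}$, because their coefficients are no longer constant on the index blocks $\{2^{j-1},\ldots,2^{j}-1\}$, and the only lower bound available (Proposition~\ref{prop:package}, resting on the telescopic Lemma~\ref{lemma:telescopic} and Corollary~\ref{cor:2}) uses that block structure essentially. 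Recall that already for $\varphi_{3}$ alone the equal-lengths pattern is not optimal, so the assertion that the analysis behind Theorem~\ref{thm:Q4} is insensitive to uneven coefficients inside a block cannot be taken for granted: it is exactly the missing proof.

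The paper closes this gap with a different decomposition that never requires estimating general dyadic piecewise constant laws. It first introduces the single piecewise affine ``tent'' law $\theta$ of (\ref{defn:theta}) and proves $K_{d}(\theta)=1$ by minorizing $\theta$ with the rescaled one-block laws $\widehat{\theta}_{m}(t)=2^{-(m-1)}\theta_{m}\left((2^{m-1}-1)t\right)$, where $\theta_{m}=\sum_{k=2^{m-1}}^{2^{m}-1}\varphi_{k}\in\pca_{2}$: there Corollary~\ref{cor:2} applies, and the loss factor $(2^{m-1}-1)/2^{m-1}$ tends to $1$, which is precisely the vanishing-loss approximation your $\zeta_{N}$ lacks. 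It then writes $\zeta$ exactly, not approximately, as $\zeta(t)=\sum_{z}(f(z+1)-f(z))\,\theta(2^{-z}t)$, and superadditivity of the Gamma-liminf together with the scaling rules and $N(\zeta)=N(\theta)\sum_{z}(f(z+1)-f(z))2^{-z}$ transfers $K_{d}(\theta)=1$ to $K_{d}(\zeta)=1$. Your verification that $\zeta\in\mathcal{A}$ and your deduction of statement~(1) from statement~(2) (taking $f$ with $f(0)=0$) are fine; the gap is confined to, but central in, the lower bound for $K_{d}(\zeta)$. A viable repair of your scheme is to apply the rescaled block minorants $\widehat{\theta}_{m}$ scale by scale inside each dyadic interval, which in substance reproduces the paper's argument.
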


The second statement of Theorem~\ref{thm:Q2} above shows in particular that there are large classes of interaction laws for which the Gamma-limit of (\ref{defn:idphi}) coincides with the pointwise limit for smooth functions.

\paragraph{\textmd{\textit{Overview of the technique}}}

The proof of these results follows the same strategy that in~\cite{AGMP:log-2} led us to show that actually $K_{d}(\varphi_{1})=\log 2$ (see also the note~\cite{AGMP:CRAS} for an informal summary of our approach).  Since we only need estimates from below for the shape factor, we can limit ourselves to estimating from below the Gamma-liminf. The main steps are the following.
\begin{itemize}

\item \emph{From local to global bounds}. In Theorem~\ref{prop:loc2glob} we reduce the problem in any dimension to intervals of the real line, namely to showing that for $\delta$ small enough we can estimate from below $\ld(\varphi,u,(a,b))$ in terms of the oscillation of $u$ in $(a,b)$.

\item \emph{Reduction to multi-variable minimum problems}. In Proposition~\ref{prop:main} we show that, when $\varphi\in\pca$, we can assume that $u$ is a nondecreasing step function with finite image contained in $\delta\z$. Up to vertical translations, any such function depends only on the lengths $\ell_{1},\ldots,\ell_{n}$ of the steps, where $n\sim\delta^{-1}$. In this way we reduce ourselves to studying the minimum of a $\delta$-dependent multi-variable function.

\item \emph{Telescopic effect}. The minimum problems found in Proposition~\ref{prop:main} can be very complicated. Nevertheless, in the special case where $\varphi\in\pca_{2}$, a telescopic effect implies that the configurations with $\ell_{1}=\ldots=\ell_{n}$ are asymptotically optimal (see Proposition~\ref{prop:package}). At this point, we have explicit estimates from below for the Gamma-liminf, which in turn yield explicit estimates from below for shape factors, thus leading to the proof of Theorem~\ref{thm:Q4}.

\item \emph{Approximation from below of special interaction laws}. The more general interaction laws of Theorem~\ref{thm:Q2} can be approximated from below by sequences of interaction laws that, up to horizontal rescaling, are in $\pca_{2}$. Again this provides estimates from below for the Gamma-liminf, and hence for shape factors, that lead to the proof of Theorem~\ref{thm:Q2}.

\end{itemize}
	
\paragraph{\textmd{\textit{Structure of the paper}}}

This paper is organized as follows. In Section~\ref{sec:previous} we recall the technical results from~\cite{AGMP:log-2} that are needed in the sequel. In Section~\ref{sec:min-pbm} we investigate the asymptotic behavior of suitable sequences of minimum problems for functions of a finite number of variables. In Section~\ref{sec:cost} we prove Proposition~\ref{prop:main}, which establishes the connection between the Gamma-limit of (\ref{defn:idphi}) and the minimum problems of the previous section. In Section~\ref{sec:proofs} we prove Theorem~\ref{thm:Q4} and Theorem~\ref{thm:Q2}. In Section~\ref{sec:speculation} we speculate about some possible generalizations and future directions.


\setcounter{equation}{0}
\section{Preliminary results}\label{sec:previous}

In this section we collect, for the convenience of the reader, the results from~\cite{AGMP:log-2} that are crucial to this paper. To begin with, we recall the definitions of truncation, vertical $\delta$-segmentation and nondecreasing rearrangement.

\begin{defn}[Truncation]\label{defn:truncation}
\begin{em}

Let $\mathbb{X}$ be any set, let $w:\mathbb{X}\to\re$ be any function, and let $A<B$ be two real numbers. The truncation of $w$ between $A$ and $B$ is the function $T_{A,B}w:\mathbb{X}\to\re$ defined by
$$T_{A,B}w(x):=\left\{
\begin{array}{l@{\qquad}l}
   A   & \mbox{if }w(x)<A,   \\[0.5ex]
   w(x)   & \mbox{if }A\leq w(x)\leq B,   \\[0.5ex]
   B   & \mbox{if }w(x)>B.  
\end{array}
\right.$$

\end{em}
\end{defn}

\begin{defn}[Vertical $\delta$-segmentation]\label{defn:delta-segm}
\begin{em}

Let $\mathbb{X}$ be any set, let $w:\mathbb{X}\to\re$ be any function, and let $\delta$ be a positive real number. The vertical $\delta$-segmentation of $w$ is the function $S_{\delta}w:\mathbb{X}\to\re$ defined by
\begin{equation}
S_{\delta}w(x):=\delta\left\lfloor\frac{w(x)}{\delta}\right\rfloor
\qquad
\forall x\in\mathbb{X}.
\nonumber
\end{equation}

The function $S_{\delta}w$ takes its values in $\delta\z$, and it is uniquely characterized by the fact that $S_{\delta}w(x)=k\delta$ for some $k\in\z$ if and only if $k\delta\leq w(x)<(k+1)\delta$.

\end{em}
\end{defn}

\begin{defn}[Nondecreasing rearrangement]\label{defn:rearrangement}
\begin{em}

Let $(a,b)\subseteq\re$ be an interval, and let $w:(a,b)\to\re$ be a function whose image is a finite set. The nondecreasing rearrangement of $w$ is the function $Mw:(a,b)\to\re$ defined by
\begin{equation}
Mw(x):=\min\left\{y\in\re:\meas\{z\in(a,b):w(z)\leq y\}\geq x-a\strut\right\}
\qquad
\forall x\in\re.
\label{defn:Mu}
\end{equation}

As expected, the function $Mw$ is nondecreasing, and satisfies
$$\meas\{x\in(a,b):Mw(x)=y\}=\meas\{x\in(a,b):w(x)=y\}
\qquad
\forall y\in\re.$$

\end{em}
\end{defn}


\subsection{Semi-discrete aggregation problem}

Let us recall the main result of~\cite[Section~2]{AGMP:log-2}, where a class of discrete and semi-discrete aggregation problems was considered. Following the terminology introduced therein, the basic ingredients are an interval $(a,b)\subseteq\re$, an integer number $k\geq 1$, and  a nonincreasing function $c:(0,b-a)\to\re$, possibly unbounded in a neighborhood of the origin, called hostility function. A semi-discrete arrangement is any measurable function $u:(a,b)\to\z$ with finite image. 

For any such function $u$, we define the total $k$-hostility as
\begin{equation}
\mathcal{F}_{k}(c,u)=\dint_{(a,b)^{2}}\varphi_{k}(|u(y)-u(x)|)\cdot c(|y-x|)\,dx\,dy,
\label{dino:idphi}
\end{equation}
where $\varphi_{k}(t)$ is the interaction law defined by (\ref{defn:phi-k}).

The key result proved in~\cite[Theorem~2.4]{AGMP:log-2} (and equivalent to some rearrangement inequalities found independently in~\cite{1973-JCT-Taylor,1974-AnIF-GarRod} in a different context) is that the nondecreasing rearrangement does not increase the total hostility. 

\begin{thmbibl}[Total hostility minimization]\label{thm:dino-sd}

Let $(a,b)\subseteq\re$ be an interval, let $k\geq 1$ be an integer, and let $c:(0,b-a)\to\re$ be a nonincreasing function. Let $u:(a,b)\to\z$ be a measurable function with finite image, let $Mu:(a,b)\to\z$ be its nondecreasing rearrangement defined by (\ref{defn:Mu}), and let $\mathcal{F}_{k}(c,u)$ be the functional defined by (\ref{dino:idphi}).

Then it turns out that
\begin{equation}
\mathcal{F}_{k}(c,u)\geq\mathcal{F}_{k}(c,Mu).
\nonumber
\end{equation}
\end{thmbibl}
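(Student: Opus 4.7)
The plan is to prove the inequality in three steps: first a layer-cake decomposition of the hostility function $c$, then a slicing in the displacement $h=y-x$, and finally a one-dimensional rearrangement inequality for the increments of $u$.

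Since $c$ is nonincreasing, its super-level sets are intervals $(0,R_{s})$ with $R_{s}:=\meas\{r:c(r)>s\}$, so $c(r)=\int_{0}^{\infty}\mathbf{1}_{(0,R_{s})}(r)\,ds$. By Fubini this reduces the problem to proving the inequality for the special kernel $c=\mathbf{1}_{(0,R)}$ with arbitrary $R>0$. For such a kernel, slicing the double integral along $h=y-x$ and using the symmetry $(x,y)\leftrightarrow(y,x)$ yields
\[
\mathcal{F}_{k}\!\left(\mathbf{1}_{(0,R)},u\right)=2\int_{0}^{R}m_{h}(u)\,dh,
\qquad
m_{h}(u):=\meas\!\left\{x\in(a,b-h):|u(x+h)-u(x)|>k\right\},
\]
so everything reduces to the pointwise bound $m_{h}(u)\geq m_{h}(Mu)$ for each $h>0$.

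I would then attack this pointwise bound via the level-set decomposition $\chi_{j}(x):=\mathbf{1}[u(x)>j]$, which gives the identities $|u(x+h)-u(x)|=\sum_{j}|\chi_{j}(x+h)-\chi_{j}(x)|$ and $Mu=\min u+\sum_{j}M\chi_{j}$, where each $M\chi_{j}$ is the indicator of an interval at the right end of $(a,b)$ and these intervals form a nested family. In the two-valued base case $u=\mathbf{1}_{E}$, the inequality $m_{h}(u)\geq m_{h}(Mu)$ reduces to the one-dimensional isoperimetric assertion that, among all $E\subseteq(a,b)$ of given measure, the quantity $\meas(E\triangle(E-h))$ is minimized when $E$ is an interval attached to the right endpoint, which is a single two-point symmetrization.

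The main obstacle is passing from the two-valued case to general integer-valued $u$ with finitely many distinct values, because the outer indicator $\mathbf{1}[\,\cdot\,>k]$ couples the layers $\chi_{j}$ nonlinearly, so a naive layer-by-layer induction does not close. My intended remedy is to follow the strategy of Garsia--Rodemich and Taylor and prove a stochastic-dominance statement for the joint distribution of $(u(x),u(x+h))$: under the uniform law of $x$ in $(a,b-h)$, the random variable $|u(x+h)-u(x)|$ first-order stochastically dominates $|Mu(x+h)-Mu(x)|$, which yields the inequality at every threshold $k$ simultaneously. An equivalent self-contained route, which I expect is the one implemented in \cite{AGMP:log-2}, is to realize the passage from $u$ to $Mu$ as a finite chain of adjacent-level swaps, chosen compatibly with the nested structure of the $\chi_{j}$'s; the technically delicate part is verifying that each such swap leaves $m_{h}$ nonincreasing.
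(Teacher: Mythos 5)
Your first two reductions (layer-cake in $c$, then slicing along $h=y-x$) are fine as far as they go, modulo the harmless point that the layer-cake formula needs $c\ge 0$; this is recoverable because $\meas\{(x,y):|u(y)-u(x)|>k\}$ depends only on the measures of the level sets of $u$, hence is unchanged by rearrangement, so adding a constant to $c$ shifts both sides of the inequality equally. The fatal step is the claim that everything reduces to the pointwise bound $m_{h}(u)\geq m_{h}(Mu)$ for each fixed $h$: this statement is false, and so are the two-valued ``isoperimetric'' version and the fixed-$h$ stochastic dominance that you propose as remedies. Counterexample: on $(a,b)=(0,4)$ let $u$ take the values $0,2,0,2$ on the four unit subintervals, and take $k=1$, $h=2$. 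Then $u(x+2)=u(x)$ for every $x\in(0,2)$, so $m_{2}(u)=0$, while $Mu$ equals $0$ on $(0,2)$ and $2$ on $(2,4)$, so $m_{2}(Mu)=2$. The same configuration (take $E=(0,1)\cup(2,3)$ and $h=2$) shows that $\meas(E\triangle(E-h))$ is not minimized among sets of given measure by an interval attached to an endpoint, and it also rules out both of your fallback routes: first-order stochastic dominance of $|u(x+h)-u(x)|$ over $|Mu(x+h)-Mu(x)|$ fails at $h=2$, and no chain of swaps can leave $m_{h}$ nonincreasing for every $h$ simultaneously, since the end-to-end comparison already fails at $h=2$.

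The missing idea is that the monotonicity of $c$ cannot be used up by reducing to the kernels $\mathbf{1}_{(0,R)}$ and then treating each displacement $h$ separately: the inequality is genuinely an integrated one, in which the extra cost that an oscillating $u$ pays at small displacements compensates the saving it enjoys at isolated larger displacements. In the example above one has $\int_{0}^{R}m_{h}(u)\,dh\geq\int_{0}^{R}m_{h}(Mu)\,dh$ for every $R$ (as it must, since $\mathbf{1}_{(0,R)}$ is itself an admissible nonincreasing $c$), even though $m_{2}(u)<m_{2}(Mu)$; any correct argument must therefore couple different values of $h$, which is exactly what the rearrangement inequalities of Taylor and Garsia--Rodemich, and the proof of Theorem~2.4 in~\cite{AGMP:log-2}, do. Note also that the present paper does not prove Theorem~\ref{thm:dino-sd} at all: it quotes it from~\cite[Theorem~2.4]{AGMP:log-2} and records its equivalence with those classical rearrangement inequalities, so there is no in-paper proof to compare with; in any case, your outline does not close because its central fixed-$h$ lemma is false.
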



\subsection{Localization technique}

One of the main points in~\cite{AGMP:log-2} was obtaining a localized version of the Gamma-liminf inequality, namely an asymptotic estimate from below for $\ld(\varphi,\ud,(a,b))$ in terms of the oscillation of $\ud$ in $(a,b)$. After such an estimate has been established, a quite classical path (see for example~\cite{ms,tesi-mora}), independent of the presence of the interaction law $\varphi$, leads to an estimate from below for the Gamma-liminf of $\ld$ in any space dimension, and hence to an estimate from below for the shape factor of $\varphi$. The precise statement is the following.

\begin{thmbibl}[From local to global bounds]\label{prop:loc2glob}

Let $\varphi\in\mathcal{A}$ be an interaction law, with scale factor $N(\varphi)$ defined by (\ref{defn:size}), and shape factor $K_{d}(\varphi)$ defined by (\ref{thmbibl:g-conv}). 

Let us assume that there exists a constant $K_{0}$ such that, for every interval $(a,b)\subseteq\re$, and every family $\{\ud\}_{\delta>0}\subseteq L^{1}((a,b))$, it happens that
\begin{equation}
\liminf_{\dzp}\ld(\varphi,\ud,(a,b))\geq K_{0}\cdot\liminf_{\dzp}\operatorname{osc}(\ud,(a,b)),
\label{hp:loc2glob}
\end{equation}
where $\operatorname{osc}(\ud,(a,b))$ denotes the essential oscillation of $\ud$ in $(a,b)$.

Then for every positive integer $d$ it turns out that
\begin{equation}
\gliminf_{\dzp}\ld\left(\varphi,u,\re^{d}\right)\geq G_{d}\cdot\frac{K_{0}}{2}\cdot\lz\left(u,\re^{d}\right)
\qquad
\forall u\in L^{1}(\re^{d}).
\label{th:loc2glob}
\end{equation}

\end{thmbibl}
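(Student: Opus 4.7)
The plan is to reduce the $d$-dimensional estimate to the one-dimensional assumption (\ref{hp:loc2glob}) by a slicing/fiber argument. The first step is to rewrite $\ld(\varphi,u,\re^d)$ in polar coordinates. Writing $y = x + r\sigma$ with $r > 0$ and $\sigma \in \sn$ gives $dy = r^{d-1}\,dr\,d\sigma$, so that the kernel $\delta/|y-x|^{d+1}$ reduces to $\delta/r^2$. Decomposing $x = z + t\sigma$ with $z \in \sigma^{\perp}$, introducing the fiber $u_{z,\sigma}(t) := u(z+t\sigma)$, and exploiting the symmetry $r \mapsto -r$, I would obtain the exact identity
\[
\ld(\varphi,u,\re^d) = \frac{1}{2} \int_{\sn} \int_{\sigma^{\perp}} \ld(\varphi,u_{z,\sigma},\re)\, dz\, d\sigma,
\]
which is where the factor $1/2$ in (\ref{th:loc2glob}) comes from.

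Next, given a sequence $\ud \to u$ in $L^1(\re^d)$ whose liminf of $\ld(\varphi,\ud,\re^d)$ is finite (otherwise the conclusion is trivial), I would extract a subsequence realizing the liminf and, via Fubini applied to $\|\ud - u\|_{L^1}$, a further subsequence along which $(\ud)_{z,\sigma} \to u_{z,\sigma}$ in $L^1(\re)$ for almost every pair $(\sigma, z)$. For such a fiber and any finite partition of $\re$ into disjoint intervals $I_1, \ldots, I_J$, the superadditivity
\[
\ld(\varphi, (\ud)_{z,\sigma}, \re) \geq \sum_{j=1}^{J} \ld(\varphi, (\ud)_{z,\sigma}, I_j)
\]
(coming from the fact that the squares $I_j \times I_j$ are disjoint subsets of $\re \times \re$), combined with (\ref{hp:loc2glob}) applied on each $I_j$ and the $L^1$-lower semicontinuity of the essential oscillation, would yield
\[
\liminf_{\dzp} \ld(\varphi, (\ud)_{z,\sigma}, \re) \geq K_0 \sum_{j=1}^{J} \operatorname{osc}(u_{z,\sigma}, I_j).
\]
Letting the partition refine, the right-hand side converges to $K_0 \cdot |D u_{z,\sigma}|(\re)$, the one-dimensional total variation of the fiber.

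The final step is to integrate in $(z,\sigma)$. Applying Fatou's lemma twice (first in $z$, then in $\sigma$) to the polar identity above would give
\[
\liminf_{\dzp} \ld(\varphi, \ud, \re^d) \geq \frac{K_0}{2} \int_{\sn} \int_{\sigma^{\perp}} |D u_{z,\sigma}|(\re)\, dz\, d\sigma.
\]
At this point I would invoke the classical BV slicing theorem $\int_{\sigma^{\perp}} |D u_{z,\sigma}|(\re)\, dz = |\sigma \cdot Du|(\re^d)$, together with a further Fubini argument that turns the $\sigma$-integral into $G_d \cdot \lz(u, \re^d)$ via the very definition (\ref{defn:gd}) of $G_d$. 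This produces exactly the constant $G_d \cdot K_0/2$ appearing in (\ref{th:loc2glob}).

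The main obstacle, in my view, is the subsequence bookkeeping rather than any single inequality: one has to work with a single subsequence of $\ud$ along which the fiberwise $L^1$ convergence holds for a.e. $(\sigma, z)$, so that the pointwise estimate on fibers can be integrated via Fatou with no leakage. A secondary subtlety is the step from partition oscillations to the fiberwise total variation, which is clean only when $\lz(u,\re^d)<+\infty$ — but this is the only case that matters, since otherwise (\ref{th:loc2glob}) is automatic.
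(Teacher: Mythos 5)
Your strategy is, in substance, the one the paper itself points to: Theorem~\ref{prop:loc2glob} is quoted from~\cite{AGMP:log-2}, whose proof combines precisely the integral-geometric representation you write down (your identity $\ld(\varphi,u,\re^d)=\frac12\int_{\sn}\int_{\sigma^{\perp}}\ld(\varphi,u_{z,\sigma},\re)\,dz\,d\sigma$ is exact, and the constant $G_d$ does arise from the polar decomposition of $Du$ exactly as you indicate) with a one-dimensional local-to-global step. Your treatment of the fibers --- superadditivity over disjoint intervals, hypothesis (\ref{hp:loc2glob}) on each interval, lower semicontinuity of the essential oscillation under $L^1$ convergence, and the supremum over partitions to recover the fiberwise variation --- is a somewhat more direct substitute for the piecewise affine approximation used in~\cite[Section~3.2]{AGMP:log-2}, and these steps, including the subsequence bookkeeping you worry about, all go through.

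The one genuine flaw is the closing remark that when $\lz(u,\re^d)=+\infty$ the inequality (\ref{th:loc2glob}) is ``automatic''. It is not: in that case the right-hand side is $+\infty$, so the claim is exactly that every family converging in $L^1$ to a non-BV limit has energies diverging, which is a substantive part of the statement rather than a triviality. As written, your proof only covers $u\in BV(\re^d)$. The repair stays inside your own argument: the supremum over partitions of the summed essential oscillations equals the essential variation of the fiber also when the fiber is not BV (both sides being $+\infty$), so Fatou still gives $\liminf_{\dzp}\ld(\varphi,\ud,\re^d)\geq\frac{K_0}{2}\int_{\sn}\int_{\sigma^{\perp}}|Du_{z,\sigma}|(\re)\,dz\,d\sigma$ for every $u\in L^1(\re^d)$; and if this double integral were finite, then $\int_{\sigma^{\perp}}|Du_{z,\sigma}|(\re)\,dz$ would be finite for a set of directions of positive measure, in particular for $d$ linearly independent directions, which together with $u\in L^1(\re^d)$ forces $u\in BV(\re^d)$. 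Hence for $u\notin BV(\re^d)$ the right-hand side of your Fatou estimate is already $+\infty$, and the BV slicing identity is only needed (and legitimately used) in the finite-variation case.
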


The proof of Theorem~\ref{prop:loc2glob} has two distinct steps. 

The first one, for which we refer to~\cite[Section~3.2]{AGMP:log-2}, exploits a piecewise affine approximation in order to deduce (\ref{th:loc2glob}) in dimension one from the local estimate (\ref{hp:loc2glob}). We remark that $G_{1}=2$, and therefore in dimension one the geometric constant cancels the denominator, so that (\ref{hp:loc2glob}) is exactly the localized version of (\ref{th:loc2glob}).

The second step, for which we refer to~\cite[Section~4]{AGMP:log-2}, relies on an integral-geometric representation of both the total variation and the double integral (\ref{defn:idphi}). This representation leads from (\ref{th:loc2glob}) in dimension one to the analogous inequality in any space dimension. 


\setcounter{equation}{0}
\section{A family of multi-variable minimum problems}\label{sec:min-pbm}

Let $n$ be a positive integer, and let $(\ell_{1},\ldots,\ell_{n})$ be an $n$-tuple of nonnegative real numbers. For every positive integer $k\leq n$ we consider all possible sums of $k$ consecutive terms
\begin{equation}
S_{i,k}(\ell_{1},\ldots,\ell_{n}):=\sum_{h=0}^{k-1}\ell_{i+h}
\qquad
\forall i\in\{1,\ldots,n-k+1\},
\label{defn:Sik}
\end{equation}
and we define the set
\begin{equation}
D_{n,k}:=\left\{(\ell_{1},\ldots,\ell_{n})\in[0,+\infty)^{n}:S_{i,k}(\ell_{1},\ldots,\ell_{n})>0\quad\forall i\in\{1,\ldots,n-k+1\}\strut\right\}
\end{equation}
of all $n$-tuples of nonnegative real numbers without $k$ consecutive components equal to zero. When $n\geq k+1$, for every $(\ell_{1},\ldots,\ell_{n})\in D_{n,k}$ we can set
\begin{equation}
L_{k}(\ell_{1},\ldots,\ell_{n}):=\sum_{i=1}^{n-k}\log\frac{[S_{i,k+1}(\ell_{1},\ldots,\ell_{n})]^{2}}{S_{i,k}(\ell_{1},\ldots,\ell_{n})\cdot S_{i+1,k}(\ell_{1},\ldots,\ell_{n})}.
\label{defn:Lk}
\end{equation}

Given any interaction law $\varphi\in\pca$, we call $\mu(\varphi)$ the smallest integer $k$ such that $\lambda_{k}\neq 0$ in the representation (\ref{defn:phi-pca}), and for every $n\geq m+1$ we consider the homogeneous function $P_{n,\varphi}:D_{n,\mu(\varphi)}\to\re$ defined by
\begin{equation}
P_{n,\varphi}(\ell_{1},\ldots,\ell_{n}):=\sum_{k=\mu(\varphi)}^{m}\lk L_{k}(\ell_{1},\ldots,\ell_{n})
\qquad
\forall (\ell_{1},\ldots,\ell_{n})\in D_{n,\mu(\varphi)},
\label{defn:pnp}
\end{equation}
and its infimum
\begin{eqnarray}
I_{n}(\varphi) & := & \inf\left\{P_{n,\varphi}(\ell_{1},\ldots,\ell_{n}):(\ell_{1},\ldots,\ell_{n})\in D_{n,\mu(\varphi)}\strut\right\}
\nonumber \\[0.5ex]
 & = & \inf\left\{P_{n,\varphi}(\ell_{1},\ldots,\ell_{n}):(\ell_{1},\ldots,\ell_{n})\in(0,+\infty)^{n}\strut\right\}.
\label{defn:Inp}
\end{eqnarray}

Let us consider for example the interaction law $\varphi_{1}$ defined in (\ref{defn:phi-1}). In this case $\mu(\varphi_{1})=1$, the set $D_{n,1}$ is just $(0,+\infty)^{n}$, and the function in (\ref{defn:pnp}) has the form
\begin{eqnarray}
P_{n,\varphi_{1}}(\ell_{1},\ldots,\ell_{n}) & = & L_{1}(\ell_{1},\ldots,\ell_{n}) 
\nonumber  \\[1ex]
& = & \log\frac{(\ell_{1}+\ell_{2})^{2}}{\ell_{1}\ell_{2}}+\log\frac{(\ell_{2}+\ell_{3})^{2}}{\ell_{2}\ell_{3}}+\ldots+\log\frac{(\ell_{n-1}+\ell_{n})^{2}}{\ell_{n-1}\ell_{n}}.
\nonumber
\end{eqnarray}

All the fractions inside the logarithms are greater than or equal to 4, and hence $I_{n}(\varphi_{1})=(n-1)\log 4$, with the minimum realized when all the variables are equal. This computation was the final step in the proof of the crucial estimate in~\cite{AGMP:log-2}. In Section~\ref{sec:cost} of the present paper we show that the asymptotic behavior of $I_{n}(\varphi)$ plays a fundamental role in estimating from below the shape factor of any interaction law $\varphi\in\pca$. 

Unfortunately, things are not that simple for larger values of $k$. For example, when the interaction law is $\varphi_{3}$, the function $L_{3}(\ell_{1},\ldots,\ell_{n})$ is given by
\begin{equation}
\log\frac{(\ell_{1}+\ell_{2}+\ell_{3}+\ell_{4})^{2}}{(\ell_{1}+\ell_{2}+\ell_{3})(\ell_{2}+\ell_{3}+\ell_{4})}+\log\frac{(\ell_{2}+\ell_{3}+\ell_{4}+\ell_{5})^{2}}{(\ell_{2}+\ell_{3}+\ell_{4})(\ell_{3}+\ell_{4}+\ell_{5})}+\ldots,
\nonumber
\end{equation}
and now the minimum is not realized when all the variables are equal (for example the periodic pattern $1,0,0,1,0,0,\ldots$ is better). 

Of course any interaction law of the form $\varphi_{k}$ can be dealt with as a rescaling of $\varphi_{1}$, but nevertheless the appearance of different patterns in the minimization process seems to suggest that things get worse and worse when we take linear combinations of the form (\ref{defn:pnp}). Fortunately this is not always the case. Indeed, when we expand linear combinations of this form, the numerators of the terms of $L_{k}$ can partially cancel with the denominators of the terms of $L_{k+1}$, leading to the following result.

\begin{lemma}[Telescopic effect]\label{lemma:telescopic}

Let $a$, $b$, and $n$ be positive integers such that $a\leq b\leq n-1$. Let $S_{i,k}$ and $L_{k}$ be the functions of $n$ variables defined in (\ref{defn:Sik}) and (\ref{defn:Lk}).

Then for every $(\ell_{1},\ldots,\ell_{n})\in D_{n,a}$ it turns out that
\begin{equation}
\sum_{j=a}^{b}L_{j}(\ell_{1},\ldots,\ell_{n})\geq\sum_{i=1}^{n-b}\log\frac{[S_{i,b+1}(\ell_{1},\ldots,\ell_{n})]^{2}}{S_{i,a}(\ell_{1},\ldots,\ell_{n})\cdot S_{i+(b-a)+1,a}(\ell_{1},\ldots,\ell_{n})}.
\label{th:telescopic}
\end{equation}

\end{lemma}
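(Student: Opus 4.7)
The plan is to prove an \emph{exact identity}—not merely the inequality—by executing two independent telescoping sums, and then to observe that the resulting boundary remainder is manifestly nonnegative. The starting decomposition is
\[
\log\frac{S_{i,j+1}^{2}}{S_{i,j}\,S_{i+1,j}} \;=\; \log\frac{S_{i,j+1}}{S_{i,j}} \;+\; \log\frac{S_{i,j+1}}{S_{i+1,j}},
\]
which splits each $L_{j}$ into two pieces. The first piece telescopes in $j$ at fixed starting index $i$, because $S_{i,j+1}/S_{i,j}$ simply appends $\ell_{i+j}$ on the right. The second piece telescopes in $i$ at fixed right endpoint $m:=i+j$: writing $S_{i,j+1}=\sum_{h=i}^{m}\ell_{h}$ and $S_{i+1,j}=\sum_{h=i+1}^{m}\ell_{h}$ shows that the ratio just prepends $\ell_{i}$ on the left, so summing over $i$ with $m$ held constant collapses in the same way.

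Executing the first telescoping, I would swap the order of summation in $\sum_{j,i}\log(S_{i,j+1}/S_{i,j})$: for each $i\in\{1,\ldots,n-a\}$ the inner sum runs from $j=a$ to $j=\min(b,n-i)$, so splitting the outer sum at $i=n-b$ produces
\[
\sum_{i=1}^{n-b}\log\frac{S_{i,b+1}}{S_{i,a}} \;+\; \sum_{i=n-b+1}^{n-a}\log\frac{S_{i,\,n-i+1}}{S_{i,a}}.
\]
For the second piece I would reparameterize by $m=i+j\in\{a+1,\ldots,n\}$, with admissible range $i\in\{\max(1,m-b),\ldots,m-a\}$; splitting the cases $m\le b$ and $m>b$ (and reparameterizing the latter by $i=m-b$) yields
\[
\sum_{m=a+1}^{b}\log\frac{S_{1,m}}{S_{m-a+1,\,a}} \;+\; \sum_{i=1}^{n-b}\log\frac{S_{i,b+1}}{S_{i+b-a+1,\,a}}.
\]
Adding the two results, the two $\sum_{i=1}^{n-b}$ contributions combine into $\sum_{i=1}^{n-b}\log\bigl(S_{i,b+1}^{2}/(S_{i,a}S_{i+b-a+1,a})\bigr)$, which is exactly the right-hand side of \eqref{th:telescopic}. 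The surplus is a ``boundary remainder'' formed by the two remaining sums.

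To conclude, I would observe that this remainder is nonnegative term by term: in $\log(S_{1,m}/S_{m-a+1,a})$ the denominator consists of the last $a$ summands of the numerator, while in $\log(S_{i,n-i+1}/S_{i,a})$ the denominator consists of the first $a$ summands of the numerator; since every $\ell_{h}\ge 0$, each ratio is at least $1$. The hypothesis $(\ell_{1},\ldots,\ell_{n})\in D_{n,a}$ ensures that every denominator is strictly positive, so each logarithm is well-defined and nonnegative. The only delicate point of the argument is the index bookkeeping in the two case-splits (at $i=n-b$ in the first telescoping and at $m=b$ in the second); once that is handled, the inequality reduces to the trivial fact that $\log(1+x)\ge 0$ for $x\ge 0$.
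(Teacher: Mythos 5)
Your proof is correct, and it arrives at exactly the same identity as the paper: both arguments show that $\sum_{j=a}^{b}L_{j}$ equals the right-hand side of (\ref{th:telescopic}) plus the two boundary sums $\sum_{k=1}^{b-a}\log\bigl(S_{1,k+a}/S_{k+1,a}\bigr)$ and $\sum_{k=n-b+1}^{n-a}\log\bigl(S_{k,n+1-k}/S_{k,a}\bigr)$, each termwise nonnegative because the denominator is a sub-sum of the numerator. The difference is in how the cancellation is organized. The paper writes $\sum_{j}L_{j}=2\Sigma_{1}-\Sigma_{2}-\Sigma_{3}$, where each $\Sigma$ is an aggregated double sum of logarithms of the $S_{i,k}$, and cancels the bulk terms by index shifts and by peeling off first/last terms, so the telescoping stays implicit. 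You instead split each summand of $L_{j}$ as $\log(S_{i,j+1}/S_{i,j})+\log(S_{i,j+1}/S_{i+1,j})$ and run two genuine one-dimensional telescopes: in $j$ at fixed left endpoint $i$ (appending $\ell_{i+j}$ on the right) and in $i$ at fixed right endpoint $m=i+j$ (prepending $\ell_{i}$ on the left). Your index bookkeeping at the two case splits ($i=n-b$ and $m=b$) is correct, and your remark that all denominators satisfy $S_{i,j}\geq S_{i,a}>0$ for $j\geq a$ on $D_{n,a}$ covers the well-definedness that the paper uses implicitly. What your organization buys is transparency — the source of the surplus terms is visible as the truncated ends of the two telescopes — while the paper's aggregated computation is the same calculation in more compressed form; neither version is more general than the other.
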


\begin{proof}

To begin with, we observe that (\ref{th:telescopic}) is an equality when $b=a$. Therefore, in the sequel we assume that $b\geq a+1$. For the sake of shortness, throughout this proof we omit the explicit dependence on the variables $\ell_{1}$, \ldots, $\ell_{n}$. The left-hand side of (\ref{th:telescopic}) can be written in the form
\begin{equation}
\sum_{j=a}^{b}L_{j}=2\Sigma_{1}-\Sigma_{2}-\Sigma_{3},
\label{sigma-1-2-3}
\end{equation}
where
\begin{equation}
\Sigma_{1}:=\sum_{j=a}^{b}\sum_{i=1}^{n-j}\log S_{i,j+1},
\qquad
\Sigma_{2}:=\sum_{j=a}^{b}\sum_{i=1}^{n-j}\log S_{i,j},
\qquad
\Sigma_{3}:=\sum_{j=a}^{b}\sum_{i=1}^{n-j}\log S_{i+1,j}.
\nonumber
\end{equation}

With some algebra (shift of indices and separation of the terms corresponding to the first or last value of some index) we can rewrite the three sums as follows
\begin{equation}
\Sigma_{1}=\sum_{i=1}^{n-b}\log S_{i,b+1}+\sum_{j=a+1}^{b}\log S_{1,j}+\sum_{j=a+1}^{b}\log S_{n-j+1,j}+\sum_{j=a+1}^{b}\sum_{i=2}^{n-j}\log S_{i,j},
\nonumber
\end{equation}
\begin{equation}
\Sigma_{2}=\sum_{i=1}^{n-a}\log S_{i,a}+\sum_{j=a+1}^{b}\log S_{1,j}+\sum_{j=a+1}^{b}\sum_{i=2}^{n-j}\log S_{i,j},
\nonumber
\end{equation}
\begin{equation}
\Sigma_{3}=\sum_{i=2}^{n-a+1}\log S_{i,a}+\sum_{j=a+1}^{b}\log S_{n-j+1,j}+\sum_{j=a+1}^{b}\sum_{i=2}^{n-j}\log S_{i,j}.
\nonumber
\end{equation}

When we plug these three equalities into (\ref{sigma-1-2-3}), all double sums cancel, and also the second sums in $\Sigma_{2}$ and $\Sigma_{3}$ cancel with a part of the second and third term in $\Sigma_{1}$. We end up with
\begin{eqnarray}
2\Sigma_{1}-\Sigma_{2}-\Sigma_{3} & = & 2\sum_{i=1}^{n-b}\log S_{i,b+1}+\sum_{j=a+1}^{b}\log S_{1,j}+\sum_{j=a+1}^{b}\log S_{n-j+1,j}  
\nonumber  \\[1ex]
& & -\sum_{i=1}^{n-a}\log S_{i,a}-\sum_{i=2}^{n-a+1}\log S_{i,a}.
\label{sigma-bis}
\end{eqnarray}

Let us reorganize these terms. In the second and third sum we change the indices, and we rewrite them as
\begin{equation}
\sum_{j=a+1}^{b}\log S_{1,j}=\sum_{k=1}^{b-a}\log S_{1,k+a},
\nonumber
\end{equation}
\begin{equation}
\sum_{j=a+1}^{b}\log S_{n-j+1,j}=\sum_{k=n-b+1}^{n-a}\log S_{k,n+1-k}.
\nonumber
\end{equation}

In the fourth sum we split the terms as follows
\begin{equation}
\sum_{i=1}^{n-a}\log S_{i,a}=\sum_{i=1}^{n-b}\log S_{i,a}+\sum_{k=n-b+1}^{n-a}\log S_{k,a}.
\nonumber
\end{equation}

In the fifth sum we split the terms, and then we shift one index in order to rewrite the sum as
\begin{eqnarray}
\sum_{i=2}^{n-a+1}\log S_{i,a} & = & \sum_{i=2}^{b-a+1}\log S_{i,a}+\sum_{i=b-a+2}^{n-a+1}\log S_{i,a} 
\nonumber  \\[1ex]
 & = & \sum_{k=1}^{b-a}\log S_{k+1,a}+\sum_{i=1}^{n-b}\log S_{i+(b-a)+1,a}.
\nonumber
\end{eqnarray}

Plugging all these equalities into (\ref{sigma-bis}) we find that
\begin{eqnarray}
2\Sigma_{1}-\Sigma_{2}-\Sigma_{3} & = & \sum_{i=1}^{n-b}\log\frac{[S_{i,b+1}]^{2}}{S_{i,a}\cdot S_{i+(b-a)+1,a}} 
\nonumber  \\[1ex]
 & & +\sum_{k=1}^{b-a}\log\frac{S_{1,k+a}}{S_{k+1,a}}+\sum_{k=n-b+1}^{n-a}\log\frac{S_{k,n+1-k}}{S_{k,a}}.
\nonumber
\end{eqnarray}

In the sums of the last line, all terms are nonnegative because in all the fractions the numerators are greater than or equal to the corresponding denominators. Recalling (\ref{sigma-1-2-3}), it follows that
\begin{equation}
\sum_{j=a}^{b}L_{j}=2\Sigma_{1}-\Sigma_{2}-\Sigma_{3}\geq\sum_{i=1}^{n-b}\log\frac{[S_{i,b+1}]^{2}}{S_{i,a}\cdot S_{i+(b-a)+1,a}},
\nonumber
\end{equation}
which completes the proof of (\ref{th:telescopic}).
\end{proof}


\begin{cor}\label{cor:2}

Let us consider the situation described in Lemma~\ref{lemma:telescopic} in the special case where $a=2^{m-1}$ and $b=2^{m}-1$ for some positive integer $m$.

Then for every $n\geq 2^{m}$ it turns out that
\begin{equation}
\sum_{k=2^{m-1}}^{2^{m}-1}L_{k}(\ell_{1},\ldots,\ell_{n})\geq (n-2^{m}+1)\cdot 2\log 2
\qquad
\forall(\ell_{1},\ldots,\ell_{n})\in D_{n,a}.
\label{th:cor-2}
\end{equation}

\end{cor}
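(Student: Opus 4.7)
The plan is to deduce this corollary by a direct application of Lemma~\ref{lemma:telescopic} followed by an AM-GM estimate on each summand.

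First I would specialize Lemma~\ref{lemma:telescopic} to $a = 2^{m-1}$ and $b = 2^{m}-1$. With these choices, $b+1 = 2a$ and $b - a + 1 = a$, so the right-hand side of (\ref{th:telescopic}) becomes
\begin{equation}
\sum_{i=1}^{n-2^{m}+1}\log\frac{[S_{i,2a}(\ell_{1},\ldots,\ell_{n})]^{2}}{S_{i,a}(\ell_{1},\ldots,\ell_{n})\cdot S_{i+a,a}(\ell_{1},\ldots,\ell_{n})}.
\nonumber
\end{equation}

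The key observation is then the additivity of the partial sums $S_{i,k}$: a block of $2a$ consecutive terms starting at index $i$ splits as the concatenation of two blocks of length $a$, giving the identity
\begin{equation}
S_{i,2a}(\ell_{1},\ldots,\ell_{n}) = S_{i,a}(\ell_{1},\ldots,\ell_{n}) + S_{i+a,a}(\ell_{1},\ldots,\ell_{n}).
\nonumber
\end{equation}
Applying the arithmetic-geometric mean inequality $(x+y)^{2}\geq 4xy$ to the two nonnegative quantities $x = S_{i,a}$ and $y = S_{i+a,a}$ (both strictly positive since $(\ell_{1},\ldots,\ell_{n})\in D_{n,a}$), every term of the right-hand side is bounded below by $\log 4 = 2\log 2$. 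Since the sum runs over $i\in\{1,\ldots,n-2^{m}+1\}$, this yields exactly (\ref{th:cor-2}).

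There is no real obstacle here: the heavy lifting has already been done in Lemma~\ref{lemma:telescopic}, and the dyadic choice of $a$ and $b$ is precisely what makes the two denominators in the telescoped expression represent complementary halves of the numerator, so that AM-GM closes the argument optimally.
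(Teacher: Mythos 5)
Your proof is correct and follows exactly the paper's argument: specialize the telescopic lemma to $a=2^{m-1}$, $b=2^{m}-1$, note the splitting identity $S_{i,b+1}=S_{i,a}+S_{i+a,a}$, and apply the arithmetic--geometric mean inequality to bound every logarithm by $2\log 2$. Nothing to add.
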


\begin{proof}

In this special case it turns out that
\begin{equation}
S_{i,b+1}=S_{i,a}+S_{i+(b-a)+1,a}
\qquad
\forall i\leq n-2^{m}+1.
\nonumber
\end{equation}

Therefore, from the inequality between arithmetic mean and geometric mean, we deduce that all the fractions in the right-hand side of (\ref{th:telescopic}) are greater than or equal to 4, and this is enough to establish (\ref{th:cor-2}).
\end{proof}


From Corollary~\ref{cor:2} we deduce a lower bound for the asymptotic behavior of $I_{n}(\varphi)$ for interaction laws $\varphi\in\pca_{2}$.

\begin{prop}[Interaction laws with package structure]\label{prop:package}

Let $m$ be a positive integer, let $a_{1}$, \ldots, $a_{m}$ be nonnegative real numbers (not all equal to 0), and let $\varphi\in\pca_{2}$ be defined as in (\ref{defn:pca2}).
 
For every integer $n\geq 2^{m}$, let $P_{n,\varphi}$ be the homogeneous function defined by (\ref{defn:pnp}), and let $I_{n}(\varphi)$ be its infimum as in (\ref{defn:Inp}).

Then it turns out that
\begin{equation}
\liminf_{n\to +\infty}\frac{I_{n}(\varphi)}{n}\geq 2\log 2\cdot\sum_{j=1}^{m}a_{j}.
\label{th:lim-in}
\end{equation}
\end{prop}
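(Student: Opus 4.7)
The plan is to apply Corollary~\ref{cor:2} directly to each dyadic package of indices, then sum with the weights $a_j$ and pass to the liminf. No further work is really needed beyond bookkeeping, since the telescopic effect has already been isolated in the previous corollary.

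Concretely, I would first rewrite $P_{n,\varphi}$ using the special $\pca_2$ form (\ref{defn:pca2}). The coefficient $\lambda_k$ in the general representation (\ref{defn:phi-pca}) equals $a_j$ for every index $k$ in the $j$-th dyadic block $\{2^{j-1},\ldots,2^j-1\}$, so for every $(\ell_1,\ldots,\ell_n)\in D_{n,\mu(\varphi)}$,
\begin{equation}
P_{n,\varphi}(\ell_1,\ldots,\ell_n)=\sum_{j=1}^m a_j\sum_{k=2^{j-1}}^{2^j-1}L_k(\ell_1,\ldots,\ell_n).
\nonumber
\end{equation}
Since $a_j\geq 0$ for each $j$, it suffices to estimate each inner sum from below. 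This is exactly the content of Corollary~\ref{cor:2}: provided $n\geq 2^j$, we have
\begin{equation}
\sum_{k=2^{j-1}}^{2^j-1}L_k(\ell_1,\ldots,\ell_n)\geq(n-2^j+1)\cdot 2\log 2.
\nonumber
\end{equation}

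Next I would just sum these bounds with weights $a_j$ for $j=1,\ldots,m$, which is legitimate as soon as $n\geq 2^m$ (so that the estimate applies to every package). This yields
\begin{equation}
P_{n,\varphi}(\ell_1,\ldots,\ell_n)\geq 2\log 2\cdot\sum_{j=1}^{m}a_j(n-2^j+1)
\nonumber
\end{equation}
for every $(\ell_1,\ldots,\ell_n)$ in the domain, hence the same bound holds for the infimum $I_n(\varphi)$.

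Finally, I would divide by $n$ and let $n\to+\infty$. Since $m$ is fixed, $(n-2^j+1)/n\to 1$ for each $j\in\{1,\ldots,m\}$, and taking the liminf on the left produces
\begin{equation}
\liminf_{n\to+\infty}\frac{I_n(\varphi)}{n}\geq 2\log 2\cdot\sum_{j=1}^m a_j,
\nonumber
\end{equation}
which is (\ref{th:lim-in}). There is no real obstacle here: the heavy lifting has been done by Lemma~\ref{lemma:telescopic} and Corollary~\ref{cor:2}, and the only thing to check is that the dyadic grouping of the coefficients of $\varphi$ is perfectly aligned with the ranges of $k$ for which the telescopic estimate gives the clean lower bound $2\log 2$ per term. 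The one point to double-check is the range restriction $n\geq 2^m$, which is harmless for a liminf as $n\to+\infty$.
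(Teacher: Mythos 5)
Your proof is correct and is essentially the paper's own argument: apply Corollary~\ref{cor:2} to each dyadic block, weight by $a_{j}\geq 0$, sum, divide by $n$, and let $n\to+\infty$ (the paper merely bounds $n-2^{j}+1\geq n-2^{m}$ uniformly before dividing, while you keep the individual terms). The only cosmetic difference is that the paper sums only over the blocks $j$ with $a_{j}>0$, which avoids pairing zero coefficients with terms $L_{k}$, $k<\mu(\varphi)$, that need not be well defined on all of $D_{n,\mu(\varphi)}$; since by (\ref{defn:Inp}) the infimum may equivalently be taken over $(0,+\infty)^{n}$, your version of the identity is harmless.
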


\begin{proof}

Let $m_{0}(\varphi)$ denote the smallest integer $k$ such that $a_{k}>0$. From Corollary~\ref{cor:2} we know that
\begin{equation}
\sum_{k=2^{j-1}}^{2^{j}-1}L_{k}(\ell_{1},\ldots,\ell_{n})\geq(n-2^{j}+1)\cdot 2\log 2
\nonumber
\end{equation}
for every $j\in\{m_{0}(\varphi),\ldots,m\}$, and therefore
\begin{equation}
P_{n,\varphi}(\ell_{1},\ldots,\ell_{n})\geq\sum_{j=m_{0}(\varphi)}^{m}a_{j}\cdot(n-2^{j}+1)\cdot 2\log 2\geq(n-2^{m})\cdot 2\log 2\cdot\sum_{j=1}^{m}a_{j}
\nonumber
\end{equation}
for every admissible choice of $\ell_{1}$, \ldots, $\ell_{n}$.  

Dividing by $n$, and letting $n\to+\infty$, we obtain (\ref{th:lim-in}).
\end{proof}


\setcounter{equation}{0}
\section{Asymptotic cost of oscillations}\label{sec:cost}

In this section we clarify the connection between the Gamma-limit of the family (\ref{defn:idphi}) and the multi-variable functions of Section~\ref{sec:min-pbm}. In analogy with~\cite{AGMP:log-2}, the question we address is the following. Let us assume that a function $\ud(x)$ oscillates between two values $A$ and $B$ in some interval $(a,b)$. Does this provide an estimate from below for $\ld(\varphi,\ud,(a,b))$, at least  when $\delta$ is small enough? A quantitative answer is provided by Proposition~\ref{prop:main} and Corollary~\ref{cor:osc}, and this answer is connected  to the Gamma-limit of the family (\ref{defn:idphi}) by Theorem~\ref{prop:loc2glob}, as we clarify in Proposition~\ref{prop:g-liminf-pca}. 

To begin with, we show that three simplifying operations can be performed on $\ud$ without neither changing its oscillation between $A$ and $B$, nor increasing its energy.


\begin{lemma}[Truncation, segmentation, rearrangement]\label{lemma:MST}

Let $a<b$ and $A<B$ be real numbers, let $u:(a,b)\to\re$ be a measurable function, and let $\varphi\in\pca$.

Then for every $\delta>0$ it turns out that
\begin{equation}
\ld(\varphi,u,(a,b))\geq\ld(\varphi,MS_{\delta}T_{A,B}u,(a,b)),
\label{th:MST}
\end{equation}
where $T_{A,B}$, $S_{\delta}$, and $M$ are the operators of truncation, vertical $\delta$-segmentation, and nondecreasing rearrangement defined at the beginning of Section~\ref{sec:previous}.

\end{lemma}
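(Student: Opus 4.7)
The strategy is to verify the three inequalities separately, one for each operator, namely
\[
\ld(\varphi,u,(a,b))\ge\ld(\varphi,T_{A,B}u,(a,b))\ge\ld(\varphi,S_{\delta}T_{A,B}u,(a,b))\ge\ld(\varphi,MS_{\delta}T_{A,B}u,(a,b)),
\]
and then chain them together. For the truncation step, I would just observe that $T_{A,B}$ is a $1$-Lipschitz operator, so $|T_{A,B}u(y)-T_{A,B}u(x)|\le|u(y)-u(x)|$ pointwise; since $\varphi$ is nondecreasing, the integrand in (\ref{defn:idphi}) drops at every pair $(x,y)$, and this inequality actually holds for all $\varphi\in\mathcal{A}$ and is independent of the $\pca$ structure.

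The segmentation step is the place where the assumption $\varphi\in\pca$ enters. Since $\varphi$ is a nonnegative linear combination of the building blocks $\varphi_{k}$ with $k\in\n$, by linearity it suffices to show
\[
\varphi_{k}\!\left(\frac{|v(y)-v(x)|}{\delta}\right)\ge\varphi_{k}\!\left(\frac{|S_{\delta}v(y)-S_{\delta}v(x)|}{\delta}\right)
\]
pointwise in $(x,y)$, for any measurable $v$ and any positive integer $k$ (here $v=T_{A,B}u$). The right-hand side equals $1$ precisely when $|S_{\delta}v(y)-S_{\delta}v(x)|/\delta=|p-q|$ is an integer strictly greater than $k$, where $p=\lfloor v(x)/\delta\rfloor$ and $q=\lfloor v(y)/\delta\rfloor$. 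In that case $v(x)\in[p\delta,(p+1)\delta)$ and $v(y)\in[q\delta,(q+1)\delta)$, so a short computation shows that $|v(y)-v(x)|/\delta\in(|p-q|-1,|p-q|+1)$, hence $|v(y)-v(x)|/\delta>|p-q|-1\ge k$ and the left-hand side also equals $1$. (If instead the right-hand side is $0$, the inequality is automatic since $\varphi_{k}\ge 0$.)

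For the rearrangement step, I would set $w:=S_{\delta}T_{A,B}u$, notice that $w$ takes values in $\delta\z$ with finite image (because it is bounded and $\delta\z$-valued on the bounded interval $(a,b)$), and write $w=\delta\cdot\widetilde{w}$ with $\widetilde{w}:(a,b)\to\z$. Since the integrand in (\ref{defn:idphi}) depends on $w$ only through $|w(y)-w(x)|/\delta=|\widetilde{w}(y)-\widetilde{w}(x)|$, and the kernel in dimension one is $c(|y-x|):=\delta/|y-x|^{2}$, which is a nonincreasing function of the distance, the contribution of each $\varphi_{k}$ to $\ld(\varphi,w,(a,b))$ is exactly the total $k$-hostility $\mathcal{F}_{k}(c,\widetilde{w})$ defined in (\ref{dino:idphi}). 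Applying Theorem~\ref{thm:dino-sd} to each $k$ appearing in the representation (\ref{defn:phi-pca}) with coefficient $\lambda_{k}>0$, and noticing that the nondecreasing rearrangement commutes with multiplication by $\delta$, i.e.\ $M\widetilde{w}=(Mw)/\delta$, we obtain $\ld(\varphi,w,(a,b))\ge\ld(\varphi,Mw,(a,b))$ by summing over $k$ with weights $\lambda_{k}$.

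The main obstacle is the segmentation step, which is also the point where the $\pca$ hypothesis cannot be relaxed to $\mathcal{A}$: for a general nondecreasing $\varphi$, replacing a real value by a smaller one at the dyadic grid can decrease a single term $\varphi(|v(y)-v(x)|/\delta)$, so the inequality must be proved separately for each atom $\varphi_{k}$ with $k\in\n$ and then summed with nonnegative weights. The case analysis above is the core of that atomic estimate; once it is in place, the truncation and rearrangement steps are essentially bookkeeping built on known ingredients (contraction and Theorem~\ref{thm:dino-sd}).
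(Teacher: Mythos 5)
Your proposal is correct and follows essentially the same route as the paper: reduce by linearity to the atoms $\varphi_{k}$, handle truncation and vertical $\delta$-segmentation by pointwise (equivalently, set-inclusion) comparisons exploiting that $\varphi_{k}$ only switches at the integer threshold $k$, and treat the rearrangement step by identifying $\ld(\varphi_{k},S_{\delta}T_{A,B}u,(a,b))$ with the total $k$-hostility $\mathcal{F}_{k}(c,v_{\delta})$ for the nonincreasing kernel $c(\sigma)=\delta\sigma^{-2}$ and invoking Theorem~\ref{thm:dino-sd}. The only (harmless) deviation is that you dispatch the truncation step for arbitrary $\varphi\in\mathcal{A}$ via the $1$-Lipschitz property of $T_{A,B}$ and monotonicity of $\varphi$, whereas the paper phrases it atomwise as an inclusion of the sets $I_{k}$.
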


\begin{proof}

Since $\ld$ is linear with respect to $\varphi$, it is enough to show inequality (\ref{th:MST}) when $\varphi=\varphi_{k}$ for some positive integer $k$, in which case
\begin{equation}
\ld(\varphi_{k},u,(a,b))=\dint_{I_{k}(\delta,u,(a,b))}\frac{\delta}{(y-x)^{2}}\,dx\,dy,
\nonumber
\end{equation}
where
\begin{equation}
I_{k}(\delta,u,(a,b)):=\left\{(x,y)\in(a,b)^{2}:|u(y)-u(x)|>k\delta\right\}.
\nonumber
\end{equation}

Let us examine the effects on $\ld$ of the three operations performed on $u$. The arguments are the same as in the first part of the proof of~\cite[Proposition~3.4]{AGMP:log-2}, where however only the case of $\varphi_{1}$ was considered.

\subparagraph{\textmd{\textit{Truncation}}}

For every $x$ and $y$ in $(a,b)$ it turns out that
$$|T_{A,B}u(y)-T_{A,B}u(x)|>k\delta
\quad\Longrightarrow\quad
|u(y)-u(x)|>k\delta.$$

This implies that $I_{k}(\delta,T_{A,B}u,(a,b))\subseteq I_{k}(\delta,u,(a,b))$, and therefore
\begin{equation}
\ld(\varphi_{k},u,(a,b))\geq\ld(\varphi_{k},T_{A,B}u,(a,b)).
\label{ineq:T}
\end{equation}

\subparagraph{\textmd{\textit{Vertical $\delta$-segmentation}}}

For every $x$ and $y$ in $(a,b)$ it turns out that
$$|S_{\delta}u(y)-S_{\delta}u(x)|>k\delta
\ \Longrightarrow\ 
|S_{\delta}u(y)-S_{\delta}u(x)|\geq (k+1)\delta
\ \Longrightarrow\ 
|u(y)-u(x)|>k\delta.$$

As before this implies that 
\begin{equation}
\ld(\varphi_{k},T_{A,B}u,(a,b))\geq\ld(\varphi_{k},S_{\delta}T_{A,B}u,(a,b)).
\label{ineq:ST}
\end{equation}

\subparagraph{\textmd{\textit{Nondecreasing rearrangement}}}

We claim that
\begin{equation}
\ld(\varphi_{k},S_{\delta}T_{A,B}u,(a,b))\geq\ld(\varphi_{k},MS_{\delta}T_{A,B}u,(a,b)).
\label{ineq:MST}
\end{equation}

This inequality, together with (\ref{ineq:T}) and (\ref{ineq:ST}), completes the proof of (\ref{th:MST}).

In order to prove (\ref{ineq:MST}), we rely on the theory of semi-discrete arrangements. To this end, we consider the semi-discrete arrangement $v_{\delta}:(a,b)\to\z$ defined by
\begin{equation}
v_{\delta}(x):=\frac{1}{\delta}S_{\delta}T_{A,B}u(x)
\qquad
\forall x\in(a,b)
\label{defn:vd}
\end{equation}
(we recall that $S_{\delta}T_{A,B}u$ takes its values in $\delta\z$, and hence $v_{\delta}(x)$ is integer valued), and the hostility function $c:(0,b-a)\to\re$ defined by $c(\sigma):=\delta\sigma^{-2}$. We observe that 
$$MS_{\delta}T_{A,B}u(x)=\delta Mv_{\delta}(x)
\qquad
\forall x\in(a,b),$$
where $Mv_{\delta}$ is the nondecreasing rearrangement of $v_{\delta}$. From (\ref{defn:vd}) and (\ref{dino:idphi}) it turns out that 
\begin{equation}
\ld(\varphi_{k},S_{\delta}T_{A,B}u,(a,b))=\delta\Lambda_{1}(\varphi_{k},v_{\delta},(a,b))=\mathcal{F}_{k}(c,v_{\delta}),
\nonumber
\end{equation}
and similarly
\begin{equation}
\ld(\varphi_{k},MS_{\delta}T_{A,B}u,(a,b))=\delta\Lambda_{1}(\varphi_{k},Mv_{\delta},(a,b))=\mathcal{F}_{k}(c,Mv_{\delta}),
\nonumber
\end{equation}
so that now (\ref{ineq:MST}) follows from Theorem~\ref{thm:dino-sd}.
\end{proof}


As a second simplifying step, we show that the double integral over $(a,b)^{2}$ can be replaced, in the computation of the liminf, by a double integral over an infinite strip, which is easier to handle. To this end, we introduce the family of functionals
\begin{equation}
\ldh(\varphi,u,(c,d)):=\int_{c}^{d}dx\int_{-\infty}^{+\infty}\varphi\left(\frac{|u(y)-u(x)|}{\delta}\right)\frac{\delta}{(y-x)^{2}}\,dy,
\nonumber
\end{equation}
and we prove the following result.

\begin{lemma}[Extension to a vertical strip]\label{lemma:vertical}

Let $a<c<d<b$ be real numbers, and let $\varphi:[0,+\infty)\to[0,+\infty)$ be a bounded measurable function. For every $\delta>0$, let $\ud:(a,b)\to\re$ be a measurable function. Let us extend $\ud$ to the whole real line by setting $\ud(x)=0$ for every $x\not\in(a,b)$.

Then it turns out that
\begin{equation}
\liminf_{\delta\to 0^{+}}\ld(\varphi,\ud,(a,b))\geq\liminf_{\delta\to 0^{+}}\ldh(\varphi,\ud,(c,d)).
\label{th:extension}
\end{equation}

\end{lemma}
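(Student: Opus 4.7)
My plan is to compare the two functionals directly by splitting the domain of integration. Since $(c,d)\subseteq(a,b)$, the region $(c,d)\times(a,b)$ is contained in $(a,b)^2$, so discarding the part with $x\in(a,b)\setminus(c,d)$ gives immediately
\begin{equation}
\ld(\varphi,\ud,(a,b))\geq\int_{c}^{d}dx\int_{a}^{b}\varphi\!\left(\frac{|\ud(y)-\ud(x)|}{\delta}\right)\frac{\delta}{(y-x)^{2}}\,dy.
\nonumber
\end{equation}
On the other hand, since $\ud$ has been extended to be identically zero outside $(a,b)$, we can decompose $\ldh(\varphi,\ud,(c,d))$ as the same right-hand side above plus the ``tail'' contribution coming from $y\in\re\setminus(a,b)$, on which $\ud(y)=0$, namely
\begin{equation}
\ldh(\varphi,\ud,(c,d))=\int_{c}^{d}dx\int_{a}^{b}\varphi\!\left(\frac{|\ud(y)-\ud(x)|}{\delta}\right)\frac{\delta}{(y-x)^{2}}\,dy+R_{\delta},
\nonumber
\end{equation}
where
\begin{equation}
R_{\delta}:=\int_{c}^{d}\varphi\!\left(\frac{|\ud(x)|}{\delta}\right)\left(\int_{-\infty}^{a}\frac{\delta}{(y-x)^{2}}\,dy+\int_{b}^{+\infty}\frac{\delta}{(y-x)^{2}}\,dy\right)dx.
\nonumber
\end{equation}

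The key step is to show that $R_{\delta}\to 0$ as $\delta\to 0^{+}$. This is where the boundedness hypothesis on $\varphi$ enters: if $M$ denotes an upper bound for $\varphi$ on $[0,+\infty)$, then $\varphi(|\ud(x)|/\delta)\leq M$, and a direct computation of the inner integrals gives
\begin{equation}
R_{\delta}\leq M\delta\int_{c}^{d}\left(\frac{1}{x-a}+\frac{1}{b-x}\right)dx=M\delta\cdot C(a,b,c,d),
\nonumber
\end{equation}
where $C(a,b,c,d)$ is a finite constant because $c>a$ and $d<b$ keep the inner kernels bounded away from their singularities. Hence $R_{\delta}$ is of order $\delta$ and vanishes as $\delta\to 0^{+}$.

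Combining the two displays above, we obtain the pointwise inequality
\begin{equation}
\ld(\varphi,\ud,(a,b))\geq\ldh(\varphi,\ud,(c,d))-R_{\delta},
\nonumber
\end{equation}
and passing to the $\liminf$ as $\dzp$ yields (\ref{th:extension}). I do not expect any serious obstacle here: the separation between $(c,d)$ and the endpoints of $(a,b)$ is precisely what tames the tail kernel, and once that is quantified, boundedness of $\varphi$ makes the error estimate straightforward. The only point that requires a bit of care is ensuring that the rearrangement of the domains is correct—specifically, that the ``inner'' part of $\ldh$ and the ``restricted'' part of $\ld$ are literally the same double integral—which is the reason for extending $\ud$ by zero outside $(a,b)$.
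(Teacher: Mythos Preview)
Your proof is correct and follows essentially the same approach as the paper: restrict the $x$-integration to $(c,d)$, split the $y$-integration into the part over $(a,b)$ and the tails over $\re\setminus(a,b)$, and use the boundedness of $\varphi$ together with the strict inclusions $a<c$ and $d<b$ to show the tail contribution is $O(\delta)$. The only cosmetic difference is that you exploit $\ud(y)=0$ outside $(a,b)$ to write the tail integrand as $\varphi(|\ud(x)|/\delta)$ before bounding it, whereas the paper bounds $\varphi$ by $\|\varphi\|_{\infty}$ directly.
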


\begin{proof}

Let us set for shortness
\begin{equation}
\fdup(x,y):=\varphi\left(\frac{|\ud(y)-\ud(x)|}{\delta}\right)\frac{\delta}{(y-x)^{2}}
\qquad
\forall (x,y)\in(a,b)\times\re.
\nonumber
\end{equation}

Since $\varphi$ is nonnegative and $(c,d)\subseteq(a,b)$, for every $\delta>0$ it turns out that
\begin{eqnarray}
\ld(\varphi,\ud,(a,b))  &  \geq  &  \int_{c}^{d}dx\int_{a}^{b}\fdup(x,y)\,dy  \nonumber  \\[1ex]
  &  =  &  \ldh(\varphi,\ud,(c,d))-\int_{c}^{d}dx\int_{\re\setminus[a,b]}\fdup(x,y)\,dy.
  \label{ineq:ld-ldh}
\end{eqnarray}

From the boundedness of $\varphi$ it follows that
\begin{equation}
\int_{c}^{d}dx\int_{b}^{+\infty}\fdup(x,y)\,dy\leq\delta\|\varphi\|_{\infty}\int_{c}^{d}dx\int_{b}^{+\infty}\frac{1}{(y-x)^{2}}\,dy.
\nonumber
\end{equation}

Since $d<b$, the double integral in the right-hand side is convergent, and hence
\begin{equation}
\lim_{\delta\to 0^{+}}\int_{c}^{d}dx\int_{b}^{+\infty}\fdup(x,y)\,dy=0.
\label{lim:b-infty}
\end{equation}

In an analogous way we obtain that
\begin{equation}
\lim_{\delta\to 0^{+}}\int_{c}^{d}dx\int_{-\infty}^{a}\fdup(x,y)\,dy=0.
\label{lim:a-infty}
\end{equation}

At this point, (\ref{th:extension}) follows from (\ref{ineq:ld-ldh}), (\ref{lim:b-infty}), and (\ref{lim:a-infty}).
\end{proof}


We are now ready to state and prove the main result of this section.

\begin{prop}[Limit cost of vertical oscillations]\label{prop:main}

Let $(a,b)\subseteq\re$ be an interval, let $\{\ud\}_{\delta>0}\subseteq L^{1}((a,b))$ be a family of functions, let $\varphi\in\pca$ be a piecewise constant interaction law, let $P_{n,\varphi}$ be the multi-variable function defined by (\ref{defn:pnp}), and let $I_{n}(\varphi)$ be its infimum as in (\ref{defn:Inp}).

Let us assume that there exist two real numbers $A\leq B$ such that 
\begin{equation}
\liminf_{\dzp}\meas\{x\in(a,b):\ud(x)\leq A+\ep\}>0
\qquad
\forall\ep>0,
\label{hp:liminf-meas1}
\end{equation}
and
\begin{equation}
\liminf_{\dzp}\meas\{x\in(a,b):\ud(x)\geq B-\ep\}>0
\qquad
\forall\ep>0.
\label{hp:liminf-meas2}
\end{equation}

Then it turns out that
\begin{equation}
\liminf_{\dzp}\ld(\varphi,\ud,(a,b))\geq (B-A)\cdot\liminf_{n\to +\infty}\frac{I_{n}(\varphi)}{n}.
\label{th:liminf-loc}
\end{equation}

\end{prop}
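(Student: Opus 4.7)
The plan is to use Lemmas~\ref{lemma:MST} and~\ref{lemma:vertical} to reduce the problem to an explicit computation on a nondecreasing $\delta\mathbb{Z}$-valued step function, and then to recognize in the resulting expression the structure of the multi-variable functions $P_{n,\varphi}$ from Section~\ref{sec:min-pbm}. First, assuming $\Lambda_{\delta}(\varphi,u_{\delta},(a,b))$ is finite (otherwise the claim is trivial), I replace $u_{\delta}$ by $v_{\delta}:=MS_{\delta}T_{A,B}u_{\delta}$, which is nondecreasing on $(a,b)$, takes values in $\delta\mathbb{Z}\cap[A,B]$, and satisfies $\Lambda_{\delta}(\varphi,u_{\delta},(a,b))\geq\Lambda_{\delta}(\varphi,v_{\delta},(a,b))$ by Lemma~\ref{lemma:MST}. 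Tracking how truncation, segmentation, and nondecreasing rearrangement transform the measures of sub- and super-level sets (truncation and rearrangement preserve them; segmentation shifts them by at most $\delta$), assumptions (\ref{hp:liminf-meas1})--(\ref{hp:liminf-meas2}) yield, for every $\epsilon>0$, numbers $\alpha_{\epsilon},\delta_{\epsilon}>0$ such that both $\{v_{\delta}\leq A+\epsilon\}$ and $\{v_{\delta}\geq B-\epsilon\}$ have measure at least $\alpha_{\epsilon}$ for $\delta\in(0,\delta_{\epsilon})$; by monotonicity of $v_{\delta}$ these are initial and final subintervals of $(a,b)$. Lemma~\ref{lemma:vertical} then allows one to pass to $\widehat{\Lambda}_{\delta}(\varphi,v_{\delta},(c,d))$ in the liminf, for a subinterval $(c,d)\Subset(a,b)$ to be chosen in terms of $\epsilon$.

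The key computation is now explicit. I describe $v_{\delta}$ by inserting zero-length dummy steps at any level of $\delta\mathbb{Z}$ between $\min v_{\delta}$ and $\max v_{\delta}$ that $v_{\delta}$ actually skips, obtaining step lengths $\ell^{(\delta)}_{1},\ldots,\ell^{(\delta)}_{n_{\delta}}\geq 0$ summing to $b-a$ on $n_{\delta}$ consecutive levels, with $n_{\delta}\delta\to B-A$ in the iterated limit ($\delta\to 0^{+}$ then $\epsilon\to 0^{+}$); the assumed finiteness of $\Lambda_{\delta}$ forces every jump of $v_{\delta}$ to be at most $\mu(\varphi)\delta$, hence $\ell^{(\delta)}\in D_{n_{\delta},\mu(\varphi)}$. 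Using the linearity $\varphi=\sum_{k}\lambda_{k}\varphi_{k}$ and the elementary closed form of $\dint(y-x)^{-2}\,dy\,dx$ over products of intervals, $\widehat{\Lambda}_{\delta}(\varphi_{k},v_{\delta},(c,d))$ expands into a double sum, over pairs $(j,l)$ of intervals with gap $p:=l-j>k$, of logarithmic expressions in the partial sums $S_{i,p}(\ell^{(\delta)})$ of (\ref{defn:Sik}). A telescoping sum-by-parts in the gap index $p$, in the spirit of the identity proved in Lemma~\ref{lemma:telescopic}, collapses this double sum into an expression that agrees with $\delta L_{k}(\ell^{(\delta)})$ up to boundary corrections involving only a bounded number of extremal partial sums. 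Summing over $k$ and using the trivial inequality $P_{n,\varphi}(\ell)\geq I_{n}(\varphi)$ yields $\widehat{\Lambda}_{\delta}(\varphi,v_{\delta},(c,d))\geq\delta I_{n_{\delta}}(\varphi)-o(\delta n_{\delta})$; dividing by $n_{\delta}$, using $\delta n_{\delta}\to B-A$ up to $O(\epsilon)$, and taking $\liminf$ first in $\delta\to 0^{+}$ and then in $\epsilon\to 0^{+}$ produces (\ref{th:liminf-loc}).

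The hardest step will be the uniform control of the boundary corrections: they involve logarithms of the form $\log((b-a)/\ell^{(\delta)}_{i})$ at indices $i$ near the two extremes of the enumeration, and would dominate the main term if these step lengths were exponentially small in $n_{\delta}$. The subinterval $(c,d)$ must be calibrated to $\epsilon$ so that the first and last step lengths of the restricted step function $v_{\delta}|_{(c,d)}$ absorb portions of the measure bounds $\alpha_{\epsilon}$ from the first paragraph; this furnishes the extremal step lengths with an $\epsilon$-dependent positive lower bound, making the boundary corrections $o(n_{\delta})$ uniformly in the interior step lengths. Letting $\epsilon\to 0^{+}$ at the end then restores the full factor $B-A$ in the final inequality.
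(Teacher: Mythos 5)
Your overall strategy is the paper's: reduce via Lemma~\ref{lemma:MST} and Lemma~\ref{lemma:vertical} to a nondecreasing $\delta\z$-valued function, read off the step lengths $\ell_{i}$, observe that a jump larger than $\mu(\varphi)\delta$ at an interior point would make the energy infinite (so the $\ell_{i}$ lie in the admissible set), bound the resulting expression below by $\delta P_{n,\varphi}\geq\delta I_{n}(\varphi)$, and count the number of levels through (\ref{hp:liminf-meas1})--(\ref{hp:liminf-meas2}), getting $n_{\delta}\delta\geq B-A-O(\ep)$. The difference lies in the step you yourself flag as hardest, and there the argument has a genuine gap. The paper's computation produces \emph{no} negative boundary corrections at all: for $x$ in a step $(x_{i-1},x_{i})$ with $\alpha+1\leq i\leq\beta-k$ it integrates $y$ over the entire half-line beyond $x_{i+k}$ (this is exactly why the strip functional $\ldh$ is introduced), so each retained contribution is the complete logarithm $\delta\log\bigl((x_{i+k}-x_{i-1})/(x_{i+k}-x_{i})\bigr)$, the two one-sided parts sum to precisely $\delta L_{k}(\ell_{1},\ldots,\ell_{\beta-\alpha})$ as in (\ref{defn:Lk}), and only nonnegative terms are discarded. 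In your pairwise-interval expansion the corrections appear because the telescoping in the gap index is truncated at the last step inside the window, and you propose to control them by an $\ep$-dependent positive lower bound on the first and last step lengths of $v_{\delta}|_{(c,d)}$, claimed to come from the measure hypotheses.

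That implication is false. Hypotheses (\ref{hp:liminf-meas1})--(\ref{hp:liminf-meas2}) control the measure of the near-extremal level sets of $\ud$, not the location of the jump points of $v_{\delta}$: with $(c,d)$ fixed in terms of $\ep$, take $\ud$ already nondecreasing, equal to $A$ on an initial segment of length $\alpha_{\ep}$ and to $B$ on a final one, but with jump points placed at distance $e^{-1/\delta}$ (or $e^{-1/\delta^{2}}$) from $c$ and from $d$; all hypotheses hold, the energy stays bounded, yet the extremal step lengths of the restriction are exponentially small in $1/\delta$. Your correction term $\delta\log\bigl((b-a)/\ell^{(\delta)}_{\mathrm{last}}\bigr)$ is then of order $1$ (respectively $1/\delta$), and since the main term $\delta I_{n_{\delta}}(\varphi)$ is itself only of order $B-A$, the correction is not $o(1)$ and the final inequality is lost. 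The fix is not a calibration of $(c,d)$ but the paper's device: keep the $y$-integration over the full half-lines (so the telescoping never terminates at a microscopic distance) and stop the $x$-summation $k$ steps before the right end of a window placed at macroscopic distance $\eta$ from $a$ and $b$; then the only possible correction (coming from the extension of $v_{\delta}$ outside $(a,b)$) has a macroscopic denominator and is $O(\delta\log(1/\eta))\to 0$. With that replacement your argument becomes the paper's proof; as written, the control of the boundary terms does not go through.
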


\begin{proof}

To begin with, we observe that (\ref{th:liminf-loc}) is trivial if $A=B$, or if the left-hand side is infinite. Up to restricting ourselves to a sequence $\delta_{k}\to 0^{+}$, we can also assume that the liminf is actually a limit. Therefore, in the sequel we assume that the left-hand side of (\ref{th:liminf-loc}) is uniformly bounded from above, and that $A<B$.

Let us fix $\ep>0$ such that $4\ep<B-A$. Due to assumptions~(\ref{hp:liminf-meas1}) and~(\ref{hp:liminf-meas2}), there exist $\eta>0$ and $\delta_{0}>0$ such that
\begin{equation}
\meas\{x\in(a,b):\ud(x)\leq A+\ep\}\geq\eta
\qquad
\forall\delta\in(0,\delta_{0}),
\label{meas-a-1}
\end{equation}
\begin{equation}
\meas\{x\in(a,b):\ud(x)\geq B-\ep\}\geq\eta
\qquad
\forall\delta\in(0,\delta_{0}).
\label{meas-b-1}
\end{equation}

Let us consider the modified family $\udh:=MS_{\delta}T_{A,B}\ud$ as in Lemma~\ref{lemma:MST}. From (\ref{meas-a-1}) and (\ref{meas-b-1}) it follows that the nondecreasing function $\udh$ satisfies
\begin{equation}
\udh(x)\leq A+2\ep
\qquad
\forall x\in(a,a+\eta),\quad\forall\delta\in(0,\delta_{1}),
\label{meas-a-3}
\end{equation}
\begin{equation}
\udh(x)\geq B-2\ep
\qquad
\forall x\in(b-\eta,b),\quad\forall\delta\in(0,\delta_{1}),
\label{meas-b-3}
\end{equation}
where $\delta_{1}:=\min\{\ep,\delta_{0}\}$. Moreover, from Lemma~\ref{lemma:MST} and Lemma~\ref{lemma:vertical} it follows that
\begin{equation}
\liminf_{\dzp}\ld(\varphi,\ud,(a,b))\geq\liminf_{\dzp}\ld(\varphi,\udh,(a,b))\geq\liminf_{\dzp}\ldh(\varphi,\udh,(a+\eta,b-\eta)),
\nonumber
\end{equation}
where in the computation of the latter we imagine that $\udh$ has been extended to the whole real line by setting it equal to 0 (or any other value) outside $(a,b)$.

In order to compute the last liminf, we need a deeper description of the structure of $\udh$. We know that $\udh$ is nondecreasing and that its image is contained in $\delta\z$. Let $\delta m_{0}$ denote the value of $\udh$ in a right neighborhood of $a$, let us set $x_{0}=a$, and for every positive integer $i$ let us define
\begin{equation}
x_{i}:=\sup\left\{x\in(a,b):\udh(x)<(m_{0}+i)\delta\right\}.
\nonumber
\end{equation}

The sequence $x_{i}$ is nondecreasing, and $x_{i}=b$ for every large enough index $i$. If $x_{i+1}>x_{i}$ for some index $i$, then it turns out that
\begin{equation}
\udh(x)=(m_{0}+i)\delta
\qquad
\forall x\in(x_{i},x_{i+1}).
\nonumber
\end{equation}

If $x_{i+1}=x_{i}$ for some index $i$, this means that
\begin{equation}
\operatorname{meas}\left\{x\in(a,b):\udh(x)=(m_{0}+i)\delta\right\}=0.
\nonumber
\end{equation}

Let $\alpha$ and $\beta$ be the two indices (which of course do depend on $\delta$) such that
\begin{equation}
\alpha:=\min\{i\in\n:x_{i}\geq a+\eta\},
\qquad\mbox{and}\qquad
\beta:=\max\{i\in\n:x_{i}\leq b-\eta\}.
\nonumber
\end{equation}

Let us consider now the interaction law $\varphi$, which we assumed of the form (\ref{defn:phi-pca}), and let $\mu(\varphi)$ denote the smallest index $k\leq m$ such that $\lambda_{k}>0$. To begin with, we show that $x_{i+\mu(\varphi)}>x_{i}$ for every index $i$ with $\alpha\leq i\leq\beta$. Indeed, if this is not the case, then it turns out that
\begin{equation}
\udh(y)-\udh(x)\geq(\mu(\varphi)+1)\delta
\qquad
\forall(x,y)\in(a,x_{i})\times(x_{i},b),
\nonumber
\end{equation}
and in particular
\begin{equation}
\ld(\varphi,\udh,(a,b))\geq\lambda_{\mu(\varphi)}\ld(\varphi_{\mu(\varphi)},\udh,(a,b))\geq\lambda_{\mu(\varphi)}\int_{a}^{x_{i}}dx\int_{x_{i}}^{b}\frac{\delta}{(y-x)^{2}}\,dy,
\nonumber
\end{equation}
which is absurd because the left-hand side is uniformly bounded from above, while the double integral in the right-hand side is divergent.

Let us consider now an integer $k\in\{\mu(\varphi),\ldots,m\}$, and for every $x\in(a,b)$ let us set
\begin{equation}
H_{k,+}(x):=\int_{x}^{+\infty}\varphi_{k}\left(\frac{|\udh(y)-\udh(x)|}{\delta}\right)\frac{\delta}{(y-x)^{2}}\,dy,
\nonumber
\end{equation}
\begin{equation}
H_{k,-}(x):=\int_{-\infty}^{x}\varphi_{k}\left(\frac{|\udh(y)-\udh(x)|}{\delta}\right)\frac{\delta}{(y-x)^{2}}\,dy.
\nonumber
\end{equation}

With this notation it turns out that
\begin{eqnarray}
\ldh\left(\varphi_{k},\udh,(a+\eta,b-\eta)\right)  & =  &  \int_{a+\eta}^{b-\eta}dx\int_{-\infty}^{+\infty}\varphi_{k}\left(\frac{|\udh(y)-\udh(x)|}{\delta}\right)\frac{\delta}{(y-x)^{2}}\,dy \nonumber \\[1ex]
 & = & \int_{a+\eta}^{b-\eta}\left(H_{k,+}(x)+H_{k,-}(x)\strut\right)\,dx \nonumber\\[1ex]
 & \geq & \int_{x_{\alpha}}^{x_{\beta-k}}H_{k,+}(x)\,dx+\int_{x_{\alpha+k}}^{x_{\beta}}H_{k,-}(x)\,dx.
 \label{int:H+-}
\end{eqnarray}

Let us compute the last two integrals separately. For every index $i\in\{\alpha+1,\ldots,\beta-k\}$ it turns out that
\begin{equation}
H_{k,+}(x)=\int_{x_{i+k}}^{+\infty}\frac{\delta}{(y-x)^{2}}\,dy=\frac{\delta}{x_{i+k}-x}
\qquad
\forall x\in(x_{i-1},x_{i}).
\nonumber
\end{equation}

The previous equality assumes that $x_{i-1}<x_{i}$, but actually it is true for trivial reasons also if $x_{i-1}=x_{i}$. It follows that
\begin{equation}
\int_{x_{i-1}}^{x_{i}}H_{k,+}(x)\,dx=\delta\log\frac{x_{i+k}-x_{i-1}}{x_{i+k}-x_{i}}\nonumber
\end{equation}
for every $i\in\{\alpha+1,\ldots,\beta-k\}$, and therefore
\begin{equation}
\int_{x_{\alpha}}^{x_{\beta-k}}H_{k,+}(x)\,dx=\sum_{i=\alpha+1}^{\beta-k}\int_{x_{i-1}}^{x_{i}}H_{k,+}(x)\,dx=\delta\sum_{i=\alpha+1}^{\beta-k}\log\frac{x_{i+k}-x_{i-1}}{x_{i+k}-x_{i}}.
\label{int:H+}
\end{equation}

In an analogous way, for every index $i\in\{\alpha+k+1,\ldots,\beta\}$ it turns out that
\begin{equation}
H_{k,-}(x)=\int_{-\infty}^{x_{i-k-1}}\frac{\delta}{(y-x)^{2}}\,dy=\frac{\delta}{x-x_{i-k-1}}
\qquad
\forall x\in(x_{i-1},x_{i}),
\nonumber
\end{equation}
so that, with a shift of indices, we obtain that
\begin{equation}
\int_{x_{i+k-1}}^{x_{i+k}}H_{k,-}(x)\,dx=\delta\log\frac{x_{i+k}-x_{i-1}}{x_{i+k-1}-x_{i-1}}\nonumber
\end{equation}
for every $i\in\{\alpha+1,\ldots,\beta-k\}$, and therefore
\begin{equation}
\int_{x_{\alpha+k}}^{x_{\beta}}H_{k,-}(x)\,dx=\sum_{i=\alpha+1}^{\beta-k}\int_{x_{i+k-1}}^{x_{i+k}}H_{k,-}(x)\,dx=\delta\sum_{i=\alpha+1}^{\beta-k}\log\frac{x_{i+k}-x_{i-1}}{x_{i+k-1}-x_{i-1}}.
\label{int:H-}
\end{equation}

Plugging (\ref{int:H+}) and (\ref{int:H-}) into (\ref{int:H+-}), we find that
\begin{equation}
\ldh\left(\varphi_{k},\udh,(a+\eta,b-\eta)\right)\geq\delta\sum_{i=\alpha+1}^{\beta-k}\log\frac{(x_{i+k}-x_{i-1})^{2}}{(x_{i+k-1}-x_{i-1})(x_{i+k}-x_{i})}.
\nonumber
\end{equation}

Setting $\ell_{i}:=x_{\alpha+i}-x_{\alpha+i-1}$ for every $i\in\{1,\ldots,\beta-\alpha\}$, we can write the last inequality in the form
\begin{eqnarray}
\ldh\left(\varphi_{k},\udh,(a+\eta,b-\eta)\right)  & \geq & \delta\sum_{i=1}^{\beta-\alpha-k}\log\frac{(\ell_{i}+\ldots+\ell_{i+k})^{2}}{(\ell_{i}+\ldots+\ell_{i+k-1})(\ell_{i+1}+\ldots+\ell_{i+k})} \nonumber \\[1ex]
 & = & \delta L_{k}(\ell_{1},\ldots,\ell_{\beta-\alpha}),
\nonumber
\end{eqnarray}
where $L_{k}$ is the multi-variable function defined in (\ref{defn:Lk}). We observe that the denominators do not vanish because $k\geq\mu(\varphi)$, and we have already proved that $x_{i+\mu(\varphi)}>x_{i}$, which is equivalent to saying that the list $(\ell_{1},\ldots,\ell_{\beta-\alpha})$ contains no $k$ consecutive terms that vanish. Since $\ldh$ is linear with respect to $\varphi$, we deduce that
\begin{eqnarray}
\ldh\left(\varphi,\udh,(a+\eta,b-\eta)\right) & \geq & \delta\sum_{k=\mu(\varphi)}^{m}\lk L_{k}(\ell_{1},\ldots,\ell_{\beta-\alpha}) 
\nonumber  \\[1ex]
& = & \delta P_{\beta-\alpha,\varphi}(\ell_{1},\ldots,\ell_{\beta-\alpha}) 
\nonumber \\[1ex]
& \geq & \delta I_{\beta-\alpha}(\varphi).
\nonumber
\end{eqnarray}

Letting $\dzp$, and observing that $\beta-\alpha\to +\infty$, we conclude that
\begin{eqnarray}
\liminf_{\dzp}\ldh\left(\varphi,\udh,(a+\eta,b-\eta)\right) & \geq & \liminf_{\dzp}\delta I_{\beta-\alpha}(\varphi) 
\nonumber  \\[1ex]
& \geq & \liminf_{\dzp}\delta(\beta-\alpha)\cdot\liminf_{\dzp}\frac{I_{\beta-\alpha}(\varphi)}{\beta-\alpha}  
\nonumber  \\[1ex]
& \geq & \liminf_{\dzp}\delta(\beta-\alpha)\cdot\liminf_{n\to +\infty}\frac{I_{n}(\varphi)}{n}. 
\label{est:almost-done}
\end{eqnarray}

It remains to compute the liminf of $\delta(\beta-\alpha)$. To this end, from (\ref{meas-a-3}) and the minimality of $\alpha$ we deduce that
$$A+2\ep\geq\udh(x)=(m_{0}+\alpha-1)\delta
\qquad
\forall x\in(x_{\alpha-1},x_{\alpha}).$$

Similarly, from (\ref{meas-b-3}) and the maximality of $\beta$ we deduce that
$$B-2\ep\leq\udh(x)=(m_{0}+\beta)\delta
\qquad
\forall x\in(x_{\beta},x_{\beta+1}).$$

It follows that $(\beta-\alpha)\delta\geq B-A-4\ep-\delta$, and therefore from (\ref{est:almost-done}) we conclude that
\begin{equation}
\liminf_{\dzp}\ldh\left(\varphi,\udh,(a+\eta,b-\eta)\right)\geq(B-A-4\ep)\cdot\liminf_{n\to +\infty}\frac{I_{n}(\varphi)}{n}.
\nonumber
\end{equation}

Letting $\ep\to 0^{+}$, we finally deduce (\ref{th:liminf-loc}).
\end{proof}

As observed in~\cite{AGMP:log-2}, we can rewrite Proposition~\ref{prop:main} as a relation between the liminf of the energy and the liminf of oscillations, as follows.
\begin{cor}\label{cor:osc}

Let $(a,b)$, $\ud$, $\varphi$ and $I_{n}(\varphi)$ be as in Proposition~\ref{prop:main}. For every $\delta>0$, let $\operatorname{osc}(\ud,(a,b))$ denote the essential oscillation of $\ud$ in $(a,b)$.

Then it turns out that
\begin{equation}
\liminf_{\dzp}\ld(\varphi,\ud,(a,b))\geq\left(\liminf_{\dzp}\operatorname{osc}(\ud,(a,b))\right)\cdot\liminf_{n\to +\infty}\frac{I_{n}(\varphi)}{n}.
\nonumber
\end{equation}

\end{cor}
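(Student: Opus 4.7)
If $\liminf_{\dzp} \operatorname{osc}(\ud,(a,b)) = 0$, the right-hand side is zero and the inequality is trivial. Otherwise, denote this liminf by $L_{0} > 0$ and set $C := \liminf_{n \to +\infty} I_{n}(\varphi)/n$. The plan is to reduce the corollary to Proposition~\ref{prop:main} by exhibiting, for every $\eta \in (0, L_{0})$, admissible real numbers $A \leq B$ (i.e., satisfying the measure hypotheses (\ref{hp:liminf-meas1})--(\ref{hp:liminf-meas2})) with $B - A \geq L_{0} - \eta$. Proposition~\ref{prop:main} would then yield $\liminf_{\dzp} \ld(\varphi, \ud, (a,b)) \geq (L_{0} - \eta) \cdot C$, and letting $\eta \to 0^{+}$ would give the corollary.

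To construct such $A$ and $B$, I would first pass to a subsequence $\delta_{k} \to 0^{+}$ along which $\operatorname{osc}(u_{\delta_{k}}, (a,b)) \to L_{0}$, and then translate each $u_{\delta_{k}}$ vertically (an operation that leaves both $\ld$ and $\operatorname{osc}$ invariant) so as to arrange $\operatorname{ess\,inf}(u_{\delta_{k}}) = 0$ for every $k$. This forces $\operatorname{ess\,sup}(u_{\delta_{k}}) \to L_{0}$, so the natural candidates are $A := 0$ and $B := L_{0} - \eta$. By definition of essential infimum and essential supremum, $\meas\{u_{\delta_{k}} \leq \ep\} > 0$ and $\meas\{u_{\delta_{k}} \geq L_{0} - \eta - \ep\} > 0$ for every $\ep > 0$ (eventually in $k$), but one needs to upgrade this \emph{positivity} into a \emph{uniformly positive liminf}. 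To do so I would apply Helly's selection theorem to the distribution functions $F_{\delta_{k}}(t) := \meas\{u_{\delta_{k}} \leq t\}$, extracting a further subsequence along which $F_{\delta_{k}}$ converges pointwise at every continuity point of a nondecreasing limit $F$, and then choose $A$ and $B$ to be continuity points of $F$ close to the extremes of its support.

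The main obstacle is precisely this upgrade of the measure condition. If the mass of $u_{\delta}$ concentrates away from its essential extrema---say in a ``dip-and-peak'' configuration where $u_{\delta}$ equals its median on most of $(a,b)$ but reaches values near its essential infimum and supremum only on vanishingly thin sets---then a translation anchored at the essential infimum may leave the limit distribution $F$ with no mass in a neighborhood of $0$. In such pathological cases I would instead translate each $u_{\delta_{k}}$ by a small positive quantile rather than by the essential infimum, and symmetrically fix $B$ with respect to a quantile from above, accepting a small loss in $B - A$ that can be absorbed by letting $\eta \to 0^{+}$ at the end. This quantile-based translation is compatible with Lemma~\ref{lemma:MST}, so one could alternatively work directly with the nondecreasing rearrangement $MS_{\delta}\ud$ and mimic the interior analysis of Proposition~\ref{prop:main} on that simpler object, which is expected to make the verification of the measure hypotheses more transparent.
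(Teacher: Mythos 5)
Your first paragraph is, in substance, the paper's entire proof: one subtracts the essential infimum $i_{\delta}$ from $\ud$ (this changes neither $\ld$ nor the oscillation) and applies Proposition~\ref{prop:main} with $A:=0$ and $B:=\liminf_{\dzp}\operatorname{osc}(\ud,(a,b))$, the hypotheses (\ref{hp:liminf-meas1})--(\ref{hp:liminf-meas2}) being read off from the definitions of essential infimum/supremum and from the fact that eventually $\operatorname{osc}(\ud,(a,b))\geq B-\ep$. The paper does not pass through distribution functions, Helly's theorem, or quantiles, and it keeps $B-A$ equal to the full limiting oscillation.

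The genuine gap lies in your proposed repair of the measure hypotheses. In the very ``dip-and-peak'' scenario you describe, anchoring $A$ and $B$ at quantiles (equivalently, at continuity points of the Helly limit $F$ near the extremes of its support) does \emph{not} give $B-A\geq L_{0}-\eta$: for any fixed quantile level the quantiles can collapse towards the median while the essential extrema remain at distance $L_{0}$, so the loss is not small and cannot be absorbed by letting $\eta\to 0^{+}$. What that route proves is the inequality with $\liminf_{\dzp}\operatorname{osc}(\ud,(a,b))$ replaced by the asymptotic spread of the distribution functions, which is strictly weaker than the corollary; any admissible fix must keep $B-A$ equal to the full oscillation, which is exactly why the paper anchors $A$ and $B$ at the essential extrema and nowhere else. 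Note also that switching to the rearranged functions $MS_{\delta}T_{A,B}\ud$ of Lemma~\ref{lemma:MST} does not make the measure hypotheses any easier, since rearrangement preserves the distribution function and hence the same concentration issue. A further, more minor but real, slip: you extract the subsequence $\delta_{k}$ realizing $\liminf_{\dzp}\operatorname{osc}(\ud,(a,b))$, but a lower bound for the energy along that particular subsequence does not bound $\liminf_{\dzp}\ld(\varphi,\ud,(a,b))$ from below; one must instead work along a subsequence realizing the liminf of the energy (along which the liminf of the oscillation is automatically at least $L_{0}$), as is done inside the proof of Proposition~\ref{prop:main} itself.
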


\begin{proof}

Let $i_{\delta}$ and $s_{\delta}$ denote the essential infimum and the essential supremum of $\ud(x)$ in $(a,b)$, respectively. Let us assume that $i_{\delta}$ and $s_{\delta}$ are real numbers (otherwise an analogous argument works with standard minor changes). Let us set $w_{\delta}(x):=\ud(x)-i_{\delta}$, and let us observe that
$$\ld(\varphi,\ud,(a,b))=\ld(\varphi,w_{\delta},(a,b))
\qquad
\forall \delta>0,$$
and that $w_{\delta}$ satisfies (\ref{hp:liminf-meas1}) and (\ref{hp:liminf-meas2}) with $A:=0$ and 
$$B:=\liminf_{\dzp}(s_{\delta}-i_{\delta})=\liminf_{\dzp}\operatorname{osc}(\ud,(a,b)).$$
 At this point the conclusion follows from Proposition~\ref{prop:main}.
\end{proof}

Combining Theorem~\ref{prop:loc2glob} and Corollary~\ref{cor:osc}, we obtain the following result, which connects the Gamma-liminf of the family (\ref{defn:idphi}) to the multi-variable minimum problems of Section~\ref{sec:min-pbm}.

\begin{prop}\label{prop:g-liminf-pca}

For every positive integer $d$ and every interaction law $\varphi\in\pca$ it turns out that
\begin{equation}
\gliminf_{\dzp}\ld\left(\varphi,u,\re^{d}\right)\geq G_{d}\cdot\frac{1}{2}\liminf_{n\to+\infty}\frac{I_{\varphi}(n)}{n}\cdot\lz\left(u,\re^{d}\right)
\qquad
\forall u\in L^{1}(\re^{d}).
\nonumber
\end{equation}
\end{prop}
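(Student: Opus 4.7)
The plan is to deduce this proposition as a direct composition of two already established results: Corollary~\ref{cor:osc}, which provides a one-dimensional local estimate from below for the energy in terms of the oscillation, and Theorem~\ref{prop:loc2glob}, which upgrades any such local estimate to a global Gamma-liminf inequality in every space dimension.

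More concretely, set
\begin{equation}
K_{0}:=\liminf_{n\to+\infty}\frac{I_{n}(\varphi)}{n}.
\nonumber
\end{equation}
First I would check that the hypothesis (\ref{hp:loc2glob}) of Theorem~\ref{prop:loc2glob} holds with this value of $K_{0}$. Given any interval $(a,b)\subseteq\re$ and any family $\{\ud\}_{\delta>0}\subseteq L^{1}((a,b))$, Corollary~\ref{cor:osc} applied to $\varphi\in\pca$ yields exactly
\begin{equation}
\liminf_{\dzp}\ld(\varphi,\ud,(a,b))\geq\left(\liminf_{\dzp}\operatorname{osc}(\ud,(a,b))\right)\cdot K_{0},
\nonumber
\end{equation}
which is the localized inequality (\ref{hp:loc2glob}) with the stated constant. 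Here I would note in passing that the constant $K_{0}$ depends only on $\varphi$ and not on the interval $(a,b)$ nor on the family $\{\ud\}$, which is what makes Theorem~\ref{prop:loc2glob} applicable.

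Once the local estimate is verified, Theorem~\ref{prop:loc2glob} applied with this $K_{0}$ immediately gives
\begin{equation}
\gliminf_{\dzp}\ld\left(\varphi,u,\re^{d}\right)\geq G_{d}\cdot\frac{K_{0}}{2}\cdot\lz\left(u,\re^{d}\right)
\qquad
\forall u\in L^{1}(\re^{d}),
\nonumber
\end{equation}
in every space dimension $d$, which is precisely the conclusion.

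Since both ingredients are already established earlier in the paper, there is no substantial obstacle: the only thing to verify is that the hypotheses match. In particular, one should make sure that Corollary~\ref{cor:osc} (which is stated for $\varphi\in\pca$) applies uniformly in the choice of interval and family, and that the constant $K_{0}$ is finite or at least that the inequality is meaningful when $K_{0}=0$ (in which case the statement is trivial). Thus the bulk of the work has already been done in Sections~\ref{sec:previous} and~\ref{sec:cost}, and this proposition just packages the reduction to the multi-variable minimum problems $I_{n}(\varphi)$ in a form directly usable in Section~\ref{sec:proofs} to derive lower bounds on the shape factor $K_{d}(\varphi)$.
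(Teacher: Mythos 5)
Your proposal is correct and is precisely the paper's own argument: the paper states this proposition as an immediate combination of Corollary~\ref{cor:osc} (which verifies the local hypothesis (\ref{hp:loc2glob}) with $K_{0}=\liminf_{n\to+\infty}I_{n}(\varphi)/n$) and Theorem~\ref{prop:loc2glob}. No gaps; the checks you mention (uniformity of $K_{0}$ in the interval and family, triviality when $K_{0}=0$) are exactly what make the composition legitimate.
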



\setcounter{equation}{0}
\section{Proof of our main results}\label{sec:proofs}

\subsubsection*{Proof of Theorem~\ref{thm:Q4}}

Let us consider, for every positive integer $m$, the interaction law defined as
\begin{equation}
\psi_{m}(t):=\sum_{k=1}^{2^{m}-1}\varphi_{k}(t)
\qquad
\forall t\geq 0.
\label{defn:psim}
\end{equation}

This interaction law can be written in the form (\ref{defn:pca2}) with $a_{1}=\ldots=a_{m}=1$, and hence $\psi_{m}\in\pca_{2}$. As a consequence, from Proposition~\ref{prop:package} we deduce that
\begin{equation}
\liminf_{n\to +\infty}\frac{I_{n}(\psi_{m})}{n}\geq m\cdot 2\log 2,
\nonumber
\end{equation}
and therefore from Proposition~\ref{prop:g-liminf-pca} we obtain that
\begin{equation}
\gliminf_{\dzp}\ld\left(\psi_{m},u,\re^{d}\right)\geq G_{d}\cdot m\log 2\cdot\lz\left(u,\re^{d}\right).
\label{g-liminf-Q4-1}
\end{equation}

On the other hand, from (\ref{thmbibl:g-conv}) we know that
\begin{equation}
\glim_{\dzp}\ld\left(\psi_{m},u,\re^{d}\right)= G_{d}\cdot N(\psi_{m})\cdot K_{d}(\psi_{m})\cdot\lz\left(u,\re^{d}\right),
\label{g-liminf-Q4-2}
\end{equation}
and with some simple calculus we find that
\begin{equation}
N(\psi_{m})=\int_{0}^{+\infty}\frac{\psi_{m}(t)}{t^{2}}\,dt=\sum_{k=1}^{2^{m}-1}\int_{0}^{+\infty}\frac{\varphi_{k}(t)}{t^{2}}\,dt=\sum_{k=1}^{2^{m}-1}\int_{k}^{+\infty}\frac{1}{t^{2}}\,dt=\sum_{k=1}^{2^{m}-1}\frac{1}{k}.
\label{est:N-Q4}
\end{equation}

Comparing (\ref{g-liminf-Q4-1}) and (\ref{g-liminf-Q4-2}) we obtain that
\begin{equation}
K_{d}(\psi_{m})\geq\frac{m \log 2}{N(\psi_{m})}
\qquad
\forall m\geq 1.
\label{est:K-Q4}
\end{equation}

Now from (\ref{est:N-Q4}) we know that $N(\psi_{m})\sim m\log 2$ as $m\to+\infty$, and therefore we conclude that $K_{d}(\psi_{m})\to 1 $ as $m\to +\infty$, independently of the space dimension.\qed


\subsubsection*{Proof of Theorem~\ref{thm:Q2} -- Statement~(1)}

Let us consider the interaction law $\theta\in\mathcal{A}_{0}$ defined by
\begin{equation}
\pt(t):=\left\{
\begin{array}{ll}
    0  &  \mbox{if }t\in[0,1],  \\[0.5ex]
    t-1  & \mbox{if }t\in[1,2],   \\[0.5ex]
    1  & \mbox{if }t\geq 2.  
\end{array}\right.
\label{defn:theta}
\end{equation}

We claim that the shape factor of $\theta$ is one in any space dimension.

For every positive integer $m$ we consider the interaction law
\begin{equation}
\pt_{m}(t):=\sum_{k=2^{m-1}}^{2^{m}-1}\varphi_{k}(t),
\nonumber
\end{equation}
and the rescaled function
\begin{equation}
\pth_{m}(t):=\frac{1}{2^{m-1}}\pt_{m}\left((2^{m-1}-1)t\right).
\nonumber
\end{equation}

To begin with, we show that
\begin{equation}
\pt(t)\geq\pth_{m}(t)
\qquad
\forall t\geq 0,\quad\forall m\geq 1.
\label{ineq:pt-pth}
\end{equation}

To this end, we distinguish three cases.

\begin{itemize}

\item If $t\in[0,1]$, then $(2^{m-1}-1)t\leq 2^{m-1}-1$, and hence $\varphi_{k}\left((2^{m-1}-1)t\right)=0$ for every $k\geq 2^{m-1}$. It follows that $\pth_{m}(t)=0$, and hence (\ref{ineq:pt-pth}) is trivial.

\item  If $t\geq 2$, then
$$\pth_{m}(t)\leq\frac{1}{2^{m-1}}\sum_{k=2^{m-1}}^{2^{m}-1}1=1=\pt(t),$$
and therefore (\ref{ineq:pt-pth}) is again satisfied.

\item If $t\in(1,2)$, let us choose $i\in\{2^{m-1},\ldots,2^{m}-1\}$ such that
$$\frac{i}{2^{m-1}}<t\leq\frac{i+1}{2^{m-1}}.$$

Since
$$(2^{m-1}-1)t\leq(2^{m-1}-1)\cdot\frac{i+1}{2^{m-1}}=i-\frac{i+1-2^{m-1}}{2^{m-1}}\leq i,$$
we deduce that
$$\varphi_{k}\left((2^{m-1}-1)t\right)=0
\qquad
\forall k\geq i,$$
and therefore
$$\pth_{m}(t)\leq\frac{1}{2^{m-1}}\sum_{k=2^{m-1}}^{i-1}\varphi_{k}(t)\leq\frac{1}{2^{m-1}}\left(i-2^{m-1}\right)=\frac{i}{2^{m-1}}-1\leq t-1=\pt(t),$$
which proves (\ref{ineq:pt-pth}) also in this case.
 
\end{itemize}

From inequality (\ref{ineq:pt-pth}), and the rescaling properties of $\ld$ with respect to the interaction law, we deduce that
\begin{eqnarray}
\gliminf_{\dzp}\ld\left(\pt,u,\re^{d}\right) & \geq & \gliminf_{\dzp}\ld\left(\pth_{m},u,\re^{d}\right) \nonumber \\[1ex]
 & = & \frac{2^{m-1}-1}{2^{m-1}}\cdot\gliminf_{\dzp}\ld\left(\theta_{m},u,\re^{d}\right).
 \label{g-liminf-Q2-0}
\end{eqnarray}

Now we observe that the interaction law $\pt_{m}(t)$ can be written in the form (\ref{defn:pca2}) with $a_{1}=\ldots=a_{m-1}=0$ and $a_{m}=1$, and hence $\pt_{m}\in\pca_{2}$. As a consequence, from Proposition~\ref{prop:package} we deduce that
\begin{equation}
\liminf_{n\to +\infty}\frac{I_{n}(\pt_{m})}{n}\geq 2\log 2,
\nonumber
\end{equation}
and therefore from Proposition~\ref{prop:g-liminf-pca} we obtain that
\begin{equation}
\gliminf_{\dzp}\ld\left(\pt_{m},u,\re^{d}\right)\geq G_{d}\cdot \log 2\cdot\lz\left(u,\re^{d}\right).
\nonumber
\end{equation}

Plugging this estimate into (\ref{g-liminf-Q2-0}), and letting $m\to+\infty$, we deduce that
\begin{equation}
\gliminf_{\dzp}\ld\left(\pt,u,\re^{d}\right)\geq G_{d}\cdot \log 2\cdot\lz\left(u,\re^{d}\right).
\label{g-liminf-Q2-1}
\end{equation}

On the other hand, from (\ref{thmbibl:g-conv}) we know that
\begin{equation}
\glim_{\dzp}\ld\left(\pt,u,\re^{d}\right)= G_{d}\cdot N(\pt)\cdot K_{d}(\pt)\cdot\lz\left(u,\re^{d}\right),
\label{g-liminf-Q2-2}
\end{equation}
and with some simple calculus we find that
\begin{equation}
N(\theta)=\int_{1}^{2}\frac{t-1}{t^{2}}\,dt+\int_{2}^{+\infty}\frac{1}{t^{2}}\,dt=\log 2.
\nonumber
\end{equation}

Comparing (\ref{g-liminf-Q2-1}) and (\ref{g-liminf-Q2-2}) we conclude that $K_{d}(\pt)=1$ in any space dimension.


\subsubsection*{Proof of Theorem~\ref{thm:Q2} -- Statement~(2)}

The function $\zeta$ is continuous because $f(z)\to 0$ as $z\to -\infty$. It is also bounded and monotone due to the corresponding assumptions on $f(z)$. Finally, assumption (\ref{hp:f-quadratic}) implies the existence of a constant $a$ such that $\zeta(t)\leq at^{2}$ for every $t\geq 0$. This proves that $\zeta\in\mathcal{A}$.

In order to compute scale and shape factor of $\zeta$, we observe that it can be written in the form
$$\zeta(t)=\sum_{z=-\infty}^{+\infty}(f(z+1)-f(z))\cdot\theta(2^{-z}t)
\qquad
\forall t\geq 0,$$
where $\theta$ is the interaction law defined in (\ref{defn:theta}). Due to the additivity and to the rescaling properties of $\ld$ with respect to the interaction law, from this representation it follows that
\begin{eqnarray}
\hspace{-2em}\glim_{\dzp}\ld\left(\zeta,u,\re^{d}\right) & \geq & \sum_{z=-\infty}^{+\infty}(f(z+1)-f(z))2^{-z}\cdot\gliminf_{\dzp}\ld\left(\pt,u,\re^{d}\right) \nonumber \\
 & = &  \sum_{z=-\infty}^{+\infty}(f(z+1)-f(z))2^{-z}\cdot G_{d}\cdot N(\pt)\cdot\lz\left(u,\re^{d}\right),
\label{g-liminf-Q22-1}
\end{eqnarray}
where in the last equality we have exploited that $K_{d}(\pt)=1$.

On the other hand, from (\ref{thmbibl:g-conv}) we know that
\begin{equation}
\glim_{\dzp}\ld\left(\zeta,u,\re^{d}\right)= G_{d}\cdot N(\zeta)\cdot K_{d}(\zeta)\cdot\lz\left(u,\re^{d}\right).
\label{g-liminf-Q22-2}
\end{equation}

Since
\begin{equation}
N(\zeta)=N(\theta)\cdot\sum_{z=-\infty}^{+\infty}(f(z+1)-f(z))2^{-z},
\nonumber
\end{equation}
comparing (\ref{g-liminf-Q22-1}) and (\ref{g-liminf-Q22-2}) we conclude that $K_{d}(\zeta)=1$ in any space dimension.\qed

\setcounter{equation}{0}
\section{Final remarks}\label{sec:speculation}

In this section we present some variants of our main results, and we speculate about some possible future extensions of the theory developed in this paper.

\paragraph{A counterexample to the short-range question with strict inequalities}

Let us consider the interaction laws
\begin{equation}
\varphi_{\ep}(t):=\left\{\!\!
\begin{array}{l@{\quad}l}
  c_{1,\ep}\cdot \ep t^{2}   &  \mbox{if }t\in[0,1], \\[0.5ex]
  c_{1,\ep}    &  \mbox{if }t> 1,
\end{array}\right.
\qquad\qquad
\psi(t):=c_{2}\psi_{2}(t),
\nonumber
\end{equation}
where $\psi_{2}(t)$ is defined by (\ref{defn:psim}) with $m=2$, and the constants $c_{1,\ep}$ and $c_{2}$ are chosen in such a way that $N(\varphi_{\ep})=N(\psi)=1$. 

From (\ref{est:K-Q4}) and (\ref{est:N-Q4}) with $m=2$ it follows that $K_{d}(\psi)\geq (12/11)\log 2$. On the other hand, it is possible (but not completely trivial) to show that $K_{d}(\varphi_{\ep})\to\log 2$ as $\ep\to 0^{+}$.

Therefore, when $\ep$ is small enough, this is an example of two interaction laws $\varphi_{\ep}$ and $\psi$ with equal scale factor, satisfying $\varphi_{\ep}(t)>\psi(t)$ for every $t\in(0,1]$, but nevertheless $K_{d}(\varphi_{\ep})<K_{d}(\psi)$ in every space dimension. This provides a  counterexample to Question~\ref{Q4} with strict inequalities.

\paragraph{True Gamma-limits and smooth recovery families}

In this paper we limited ourselves to providing estimates from below for the Gamma-liminf, since they are enough to establish both Theorem~\ref{thm:Q4} and Theorem~\ref{thm:Q2}. On the other hand, with little further effort we could prove that actually the lower bound coincides with the Gamma-limit.

This is evident in the case of the interaction laws with shape factor equal to one, for example all those provided by statement~(2) of Theorem~\ref{thm:Q2}, because for them the pointwise limit coincides on smooth functions with the estimate from below for the Gamma-liminf. Therefore, for all these interaction laws we now know both the Gamma-limit with exact values of the constants in any space dimension, and the existence of smooth recovery families.

As for the interaction laws $\psi_{m}(t)$ defined by (\ref{defn:psim}), again we can show that the Gamma-limit coincides with the lower bound we obtained for the Gamma-liminf, namely 
\begin{equation}
\glim_{\dzp}\ld(\psi_{m},u,\re^{d})=G_{d}\cdot m\log 2\cdot\lz(u,\re^{d}).
\label{cong:g-lim-psim}
\end{equation}

In order to prove this result, one should follow the path we pursued in~\cite{AGMP:log-2}. The main idea is that in any space dimension the family $S_{\delta}u$ of vertical $\delta$-segmentations of $u$ is a recovery family when $u$ is piecewise $C^{1}$ or piecewise affine with compact support, and those classes are dense in energy for the right-hand side of (\ref{cong:g-lim-psim}). Since vertical $\delta$-segmentations of piecewise affine functions with compact support are step functions with level sets that are finite unions of polytopes, it is enough to further approximate them in order to produce recovery sequences made by functions of class $C^{\infty}$ with compact support. We refer to~\cite{AGMP:log-2} for the details. Therefore, also in the case of the interaction laws $\psi_{m}(t)$, we end up with a Gamma-convergence result with both exact values of the constants in any space dimension, and existence of smooth recovery families.

The same argument should work for all interaction laws in $\pca_{2}$, and more generally for all interaction laws $\varphi\in\pca$ for which $I_{n}(\varphi)$ is realized asymptotically when all the variables are equal.

\paragraph{Toward a general formula for the Gamma-limit}

We suspect that the lower bound in Proposition~\ref{prop:main} might be optimal for every $\varphi\in\pca$, and that the liminf in the right-hand side of (\ref{th:liminf-loc}) is actually a limit. Thanks to Proposition~\ref{prop:g-liminf-pca}, this would imply that
\begin{equation}
\glim_{\dzp}\ld(\varphi,u,\re^{d})=G_{d}\cdot\frac{1}{2}\lim_{n\to +\infty}\frac{I_{n}(\varphi)}{n}\cdot\lz(u,\re^{d})
\qquad
\forall u\in L^{1}(\re^{d})
\label{cong:g-lim}
\end{equation}
for every $\varphi\in\pca$. In order to prove this result, the main difficulty seems to be the construction of recovery sequences, which in general can no more be obtained simply by vertical $\delta$-segmentation. On the contrary, the construction should now take into account the pattern that realizes the infimum $I_{n}(\varphi)$.

A representation of the form (\ref{cong:g-lim}), if true, would be important because any interaction law can be approximated from below by piecewise constant interaction laws with steps of equal horizontal length (as we did in the proof of statement~(1) of Theorem~\ref{thm:Q2}), and these laws are rescalings of laws in $\pca$.

This kind of representation would be even more important if it were true that the Gamma-limit of $\ld(\varphi,u,\re^{d})$ is the supremum of the Gamma-limits of $\ld(\psi,u,\re^{d})$ as $\psi$ varies in the set of all piecewise constant interaction laws, with steps of equal horizontal length, that are less than or equal to $\varphi$. A confirmation of this conjecture would open the way for answering several questions raised in~\cite{2018-AnPDE-BreNgu}: a simplified proof of the Gamma-convergence result in full generality, a less implicit formula for shape factors, and existence of smooth recovery families.

\paragraph{Characterization of interaction laws without gap} Concerning the gap between the pointwise limit and the Gamma-limit, the challenge is now characterizing all interaction laws with shape factor equal to one. Let us summarize what we know for the time being on this specific issue.

\begin{itemize}

\item Continuity does not guarantee the lack of the gap. Among continuous interaction laws, we have both examples without gap (all interaction laws provided by Theorem~\ref{thm:Q2}), and interaction laws with gap. Indeed, it is possible to show that the piecewise affine interaction law that is equal to 0 for $t\in[0,1-\ep]$, and equal to 1 for $t\geq 1$, has a shape factor that tends to $\log 2$ as $\ep\to 0^{+}$. 

Conversely, we have no example of discontinuous interaction law without gap.

\item  It is not a matter of vanishing in a neighborhood of the origin. Among the interactions laws in $\mathcal{A}_{0}$ we have both examples without gap (the interaction law $\theta$ defined in (\ref{defn:theta})), and examples with gap (the model interaction law $\varphi_{1}$). Among the interactions laws that are positive for every $t>0$ we have both examples without gap (defined as in statement~(2) of Theorem~\ref{thm:Q2}), and examples with gap (the interaction law $\varphi_{\ep}(t)$ defined at the beginning of this section).

\item  The shape factor is concave when restricted to interaction laws with equal scale factor. As a consequence, any convex combination of interaction laws with the same scale factor, and shape factor equal to one, has again shape factor equal to one. Considering that now we know many  interaction laws with shape factor equal to one, this leads us to guess that the set of interaction laws with shape factor equal to one might be quite large.

\end{itemize}

\paragraph{More general exponents}

It should not be difficult to extend the results of this paper to the more general family of functionals
\begin{equation}
\Lambda_{\delta,p}(\varphi,u,\Omega):=\dint_{\Omega^{2}}\varphi\left(\frac{|u(y)-u(x)|}{\delta}\right)\frac{\delta}{|y-x|^{d+p}}\,dx\,dy,
\nonumber
\end{equation}
where $p>1$ is a real number. This case has not been fully explored yet (a paper in this direction has been announced in~\cite{2018-AnPDE-BreNgu}), but the Gamma-limit is expected to be a multiple of the $L^{p}$-norm of the gradient of $u$, and the exact constant was found in~\cite{AGMP:log-2} in the case $\varphi=\varphi_{1}$. When extending the results of this paper, the presence of the general exponent $p>1$ requires probably only a change in definition (\ref{defn:Lk}), which now should be replaced by something like
\begin{eqnarray*}
L_{k,p} & := & \int_{S_{i.k}}^{S_{i,k+1}}\frac{1}{\sigma^{p}}\,d\sigma+\int_{S_{i+1.k}}^{S_{i,k+1}}\frac{1}{\sigma^{p}}\,d\sigma
\nonumber \\[1ex]
 & = & \frac{1}{p-1}\sum_{i=1}^{n-k}\left(-\frac{2}{[S_{i,k+1}]^{p-1}}+\frac{1}{[S_{i,k}]^{p-1}}+\frac{1}{[S_{i+1,k}]^{p-1}}\right).
\end{eqnarray*}

Again, the special structure of the terms of the sum should guarantee the telescopic effect as in Lemma~\ref{lemma:telescopic}.


\subsubsection*{\centering Acknowledgments}

We would like to thank H.~Brezis for his positive feedback on the manuscript~\cite{AGMP:log-2}, and for encouraging us to investigate the gap between Gamma-limit and pointwise limit.


\label{NumeroPagine}

\end{document}